\newcommand{\cyclic}{\mathop{\kern0.9ex{{+}\kern-2.10ex\raise-0.20
      ex\hbox{\Large\hbox{$\circlearrowright$}}}}\limits}
\newtheoremstyle{daniel}{3.0mm}{2mm}{\itshape}{}{\bfseries}{.}{1.5mm}{}
\theoremstyle{daniel}
\newtheorem{thm}{Theorem}[section]
\newtheorem{NotRem}[thm]{Notation and Remark}
\newtheorem{prop}[thm]{Proposition}
\newtheorem{Defi}[thm]{Definition}
\newtheorem{Exs}[thm]{Examples}
\newtheorem{Rems}[thm]{Remarks}
\newtheorem*{thm*}{Theorem}
\newtheorem*{cor*}{Corollary}
\newtheorem*{thm3.6}{Theorem 3.6}
\newtheorem*{thm4.3}{Theorem 4.3}
\newtheorem*{prop*}{Proposition}
\newtheorem*{Notation}{Notation}
\newtheorem{Th}{Theorem}[section]
\newtheorem{Lem}[thm]{Lemma}
\newtheorem{Cor}[thm]{Corollary}
\newtheorem{Def}[thm]{Definition}
\newtheorem{Not}[thm]{Notation}
\newtheorem{Rem}[thm]{Remark}
\newtheorem{Ex}[thm]{Example}
\newtheorem*{Setup}{Setup}
\newenvironment{rem}   {\begin{Rem}\em}{\end{Rem}}
\newcommand{\cA}{\mathcal{A}}
\def\cC{\mathcal C }
\def\cF{\mathcal F}
\def\cI{\mathcal I }
\def\cO{\mathcal O}
\def\cW{\mathcal W}
\def\gD{{\mathfrak D}}
\def\gE{{\mathfrak E}}
\def\gF{{\mathfrak F}}
\def\gW{{\mathfrak W}}
\def\gY{{\mathfrak Y}}
\newcommand{\C}{\mathbb{C}}
\newcommand{\N}{\mathbb{N}}
\renewcommand{\P}{\mathbb{P}}
\newcommand{\R}{\mathbb{R}}
\renewcommand{\H}{\mathbb{H}}
\renewcommand{\R}{\mathbb{R}}
\DeclareMathOperator{\Amp}{Amp}
\DeclareMathOperator{\Weak}{Weak}
\DeclareMathOperator{\Coh}{Coh}
\def\ch{\mbox{ch}}
\DeclareMathOperator{\cycle}{cycle}
\def\Spec{\mbox{Spec}}
\def\Todd{\mbox{Todd}}
\DeclareMathOperator{\Conv}{Conv}
\DeclareMathOperator{\Pseff}{Pseff}
\DeclareMathOperator{\Nef}{Nef}
\DeclareMathOperator{\Quot}{Quot}
\DeclareMathOperator{\BSS}{\mathbf{BSS}}
\numberwithin{equation}{section}
\begin{document}
\title[Semistability conditions]{Semistability conditions defined by ample classes}

\author{Damien M\'egy, Mihai Pavel, Matei Toma}
\address{ Universit\'e de Lorraine, CNRS, IECL, F-54000 Nancy, France
}
\email{damien.megy@univ-lorraine.fr}
\address{Institute of Mathematics of the Romanian Academy,
P.O. Box 1-764, 014700 Bucharest, Romania
}
\email{cpavel@imar.ro}
\address{ Universit\'e de Lorraine, CNRS, IECL, F-54000 Nancy, France
}
\email{Matei.Toma@univ-lorraine.fr}

\date{\today}
\keywords{semistable coherent sheaves}
\subjclass[2010]{32G13, 14D20}

\begin{abstract}
We study a class of semistability conditions defined by a system of ample classes for coherent sheaves over a smooth projective variety. Under some necessary boundedness assumptions, we show the existence of a well-behaved  chamber structure for the variation of moduli spaces of sheaves with respect to the change of semistability.
\end{abstract}
\maketitle
\setcounter{tocdepth}{1}
\tableofcontents
\noindent

%%%%%%%%%%%%%%%%%%%%%%%%%%%%%%%%%%%%%%%%%%%%%%%%%%%%%%%%%%%%%%%%%%%%%%%

\section{Introduction}

When trying to construct moduli spaces of algebraic vector bundles of arbitrary rank on a smooth projective variety $X$ one is naturally led to restrict the set of isomorphism classes of vector bundles that one wishes to parameterize. For vector bundles over curves Mumford introduced a (semi)stability condition in \cite{MumfordGIT} and constructed the corresponding moduli space of stable vector bundles in \cite{Mumford1963ProjectiveInvariants}. Then Seshadri constructed a projective moduli space of semistable vector bundles over curves in \cite{Seshadri}. 
The developement of this theory for higher dimensional base varieties $X$, due to  Gieseker \cite{Gieseker}, Maruyama \cite{MaruyamaModuliII}, Simpson \cite{simpson1994moduli} and many others, used semistability conditions which depended on the choice of an ample divisor class on $X$. The question arised how the constructed moduli spaces depended on this choice. When $X$ is a surface it was shown (see \cite{Qin93,ellingsrud95variation,MatsukiWentworth}) that there is a rational locally finite polyhedral chamber structure on the ample cone $\Amp^1(X)$ of $X$ accounting for the variation of semistability (see our Definition \ref{def:chambers}) but this is not the case any longer in general when $\dim (X)\ge3$, cf. \cite{Qin93}. Even in cases when a chamber structure on $\Amp^1(X)$ with linear walls exists, walls do not necessarily contain rational classes, \cite{Schmitt}. It appeared therefore useful to think of different domains of parameters for semistability conditions which could allow well-behaved chamber structures and where a theory of variation of the respective moduli spaces could be performed similarly to the two-dimensional case, cf. \cite{GrebToma}, \cite{GrebRossToma-variation}, \cite{GrebRossToma-Semicontinuity}, \cite{GrebRossToma-MasterSpace}.

In a separate development  semistability conditions on additive categories other than  
just the category $\Coh(X)$ of coherent sheaves on a projective variety  
were introduced and studied.  
This was done 
by Rudakov in \cite{rudakov1997stability},  Joyce in \cite{JoyceIII}, \cite{joyce2021enumerative}, Bridgeland in \cite{Bridgeland}, Andr\'e in \cite{Andre2009} and Bayer in \cite{bayer2009polynomial},  to mention only a few.

More recently Alper, Halpern-Leistner and Heinloth established in  \cite{AlperHLH} necessary and sufficient conditions for an algebraic stack to admit a good moduli space in the sense of Alper \cite{Alper13}. They also defined an abstract notion of stability on abelian categories and applied their existence criterion to show that under suitable conditions the corresponding moduli stacks admit good moduli spaces  \cite[Theorem 7.27]{AlperHLH}. 

The purpose of  this paper is to propose and study a  parameter space for semistability conditions  on the category $\Coh(X)$ of coherent sheaves on a smooth projective variety $X$ which should have most of the good properties which we know from the classical cases and in addition  a rational locally finite chamber structure for the variation of semistability. 
More precisely for an $n$-dimensional smooth projective variety $X$ over an algebraically closed field the proposed parameter space is $\Amp^n(X)\times\ldots\times\Amp^0(X)$, where $\Amp^p(X)$ denotes the interior of the nef cone $\Nef^p(X)$ of $p$-codimensional numerical cycle classes on $X$, see Section \ref{sect:Preliminaries}. For an element $\alpha=(\alpha_n, \ldots,\alpha_0) \in \Amp^n(X)\times\ldots\times\Amp^0(X)$ we define an {\em $\alpha$-Hilbert polynomial} of a coherent sheaf $E$ on $X$ by \[
	P_\alpha(E,m) = \sum_{i=0}^n \frac{1}{i!}(\int_X \ch(E)\alpha_i \Todd_X)m^i.
\] 
and use it to further define {\em $\alpha$-semistability} of coherent sheaves (Definition \ref{def:alphaSs}). 
These semistability conditions are particular cases of the polynomial semistability conditions defined  by Bayer in \cite{bayer2009polynomial}, but are more general than the Gieseker-Maruyama conditions, the multi-Gieseker conditions from \cite{GrebRossToma-variation} or those coming from differential forms defined in \cite{TomaLimitareaII} in the complex analytic context, see Remarks \ref{rem:Bayer} and \ref{rem:degrees}. The novelty in this notion of $\alpha$-semistability is that it provides more flexibility from the perspective of variation and wall-crossing in higher dimensions, as one can vary independently the components of $\alpha$ inside their corresponding ample cones.

In Section \ref{sec:boundedness} we prove a generalization to the case of $\alpha$-Hilbert polynomials of Grothendieck's boundedness criterion from  \cite{GrothendieckHilbert}, Theorem \ref{th:boundednessCriterion} and Corollary \ref{cor:Grothendieck}. Consequences are the existence of relative Harder-Narasimhan filtrations, Theorem \ref{thm:HN}, the openness of semistability in flat families, Proposition \ref{prop:openness}, 
and Lemma \ref{lem:walls} which is the key to the finiteness of our chamber structures. We then describe some properties of the stack of $\alpha$-semistable sheaves of fixed numerical class on $X$ in Sections \ref{sec:stack} and \ref{sec:moduli}
and use the criterion of Alper, Halpern-Leistner and Heinloth \cite{AlperHLH}
to show that in characteristic zero and under a boundedness assumption this stack admits a proper good moduli space, Theorem \ref{thm:moduli}. Finally in Section \ref{sec:chambers}
we show again under a boundedness assumption the existence of a rational locally finite chamber structure on our parameter space $\Amp^n(X)\times\ldots\times\Amp^0(X)$ accounting for the change of $\alpha$-semistability on coherent sheaves of fixed numerical class on $X$, Propositions \ref{prop:chamber-slope}, \ref{prop:chamber-top}, \ref{prop:chamber-general}.

\subsection*{Acknowledgments}
We thank Arkadij Bojko for his remark which made us discover a mistake in a previous version of the paper.

This work was supported by the IRN ECO-Maths. The second author was also partly supported by the PNRR grant CF 44/14.11.2022 \textit{Cohomological Hall algebras
of smooth surfaces and applications}.

%%%%%%%%%%%%%%%%%%%%%%%%%%%%%%%%%%%%%%%%%%%

\section{Preliminaries}\label{sect:Preliminaries}

We fix a smooth projective variety $X$ of dimension $n\ge 2$ over an algebraically closed field $k$ and an ample line bundle $\cO_X(1)$ on $X$.  

\begin{Not}
    For a coherent sheaf $E$ on $X$ and a positive integer $s$ we shall denote by $N_s(E)$ the maximal coherent subsheaf of $E$ of dimension less than $s$. We shall further set 
    $E_{(s)}:=E/N_s(E)$.
    In the particular case when $s=d:=\dim E$, we shall also write  $T(E):=N_d(E)$ and $E_{pure}:=E_{(d)}$.
\end{Not}

\subsection{Degree functions.}
Let $N^p(X)_\R$ be the numerical group of real codimension $p$ cycles on $X$. We denote by $\Nef^p(X)$ the cone in $N^p(X)_\R$ dual to the pseudoeffective cone $\Pseff^{n-p}(X)$,   and by $\Amp^{p}(X)$ the interior of $\Nef^p(X)$. 
For $0\le p\le n$ the cones $\Nef^p(X)$ are full-dimensional (even for singular $X$ by \cite[Lemma 3.7]{FulgerLehmann2017cones}), so their interiors $\Amp^{p}(X)$ are non-empty.
We call the elements of $\Amp^{p}(X)$ {\em ample $p$-classes\footnote{This terminology appears already in \cite{Grothendieck-letter}.}}. Such an element $\alpha \in \Amp^p(X)$ defines a degree function
\[ 
	\deg_{\alpha} : K_0(X) \to \R
\]
by sending the class of a coherent sheaf $F$ on $X$ to $\int_X \ch(F)\alpha \Todd_X$.  For a closed subscheme $Y$ of $X$ we write simply $\deg_\alpha(Y) :=  \deg_\alpha([\cO_Y])$.

Choose a degree system $\alpha = (\alpha_d,\ldots,\alpha_r)$ with $0 \le r < d \le n$ given by ample classes $\alpha_i \in \Amp^{i}(X)$. For $E \in \Coh(X)$, we define its {\em $\alpha$-Hilbert polynomial} by
\[
	P_\alpha(E,m) = \sum_{i=r}^d \deg_{\alpha_i}(E)\frac{m^i}{i!}.
\]

Note that the degree of $P_\alpha(E)$ equals the dimension of the support of $E$ whenever $\dim(E) \ge r$ since $\deg_{\alpha_i}(E) = 0$ for $i > \dim(E)$. Also it is easy to see that $P_\alpha$ is additive in short exact sequences, and so it gives a group homomorphism $P_\alpha : K_0(X) \to \R[m]$. {For any $E \in \Coh(X)$ of dimension $e$ with $r \le e \le d$, we denote by 
\[
    p_\alpha(E) = \frac{P_\alpha(E)}{\deg_{\alpha_e}(E)}
\]
its \textit{reduced $\alpha$-Hilbert polynomial}.
}

Taking $\alpha = (h^n,\ldots,h,[X])$ with $h \in \Amp^1(X)$ integral we recover the classical notion of Hilbert polynomial, which we simply denote by $P_h$. In the sequel we will denote by $h$ the class of the fixed polarization  $\cO_X(1)$ on $X$.

\begin{rem}\label{rem:degrees} 
When $k=\C$ the degree functions $\deg_\alpha$ introduced above are {\em strong degree functions} as defined in   \cite[Definition 2.1]{TomaCriteria}. However they do not necessarily {\em come from differential forms} in the sense of \cite[Section 2.3]{TomaLimitareaII}. 

Indeed the fact that they satisfy condition $(2)$ of Definition 2.1 in \cite{TomaCriteria} is a consequence of our Lemma \ref{lem:Chow}. Requiring for our degree functions to come from differential forms would mean that 
$\Amp^p(X)\subset \Weak^p(X)$ for all $p$, where $\Weak^p(X)\subset N^p(X)$ is the cone represented by weakly positive $(p,p)$-forms on $X$. But this inclusion does not hold in general as shown in \cite[Theorem B]{DELV11}.
\end{rem}

\subsection{Semistability conditions}

Here we define a notion of semistability with respect to a degree system $\alpha = (\alpha_d,\ldots,\alpha_r)$ with $0 \le r < d \le n$ and $\alpha_i \in \Amp^i(X)$.

\begin{Def}\label{def:alphaSs}
A coherent sheaf $E$ of dimension $d$ on $X$ is {\em $\alpha$-semistable} (resp. {\em $\alpha$-stable)} if 
\begin{enumerate}
	\item $E$ is pure of dimension $d$ and
	\item for all subsheaves $F \subset E$ with $0 < \deg_{\alpha_d}(F) < \deg_{\alpha_d}(E)$ we have
	\[
		 p_\alpha(F,m) \leq p_\alpha(E,m) \text{ (resp. <) for }m \gg 0.
	\]
\end{enumerate}
\end{Def}

When $\alpha = (h^n,\ldots,h,[X])$, we recover the classical notion of Gieseker semistability. For $\alpha = (\alpha_d,\alpha_{d-1})$ we obtain a notion of slope-semistability for purely $d$-dimensional coherent sheaves on $X$ with respect to the slope $$\mu^{\alpha_{d-1}}_{\alpha_{d}} \coloneqq  \frac{\deg_{\alpha_{d-1}}}{\deg_{\alpha_{d}}}.$$

As in the case of Gieseker semistability one can prove the following:

\begin{prop}\label{prop:abelianCategory}
Let $\alpha = (\alpha_d,\ldots,\alpha_0)$ be a degree system of ample classes. Then the full subcategory of $\Coh(X)$ whose objects are the zero sheaf and all $\alpha$-semistable sheaves of dimension $d$ on $X$ with fixed reduced $\alpha$-Hilbert polynomial $p$ is abelian, noetherian, artinian and closed under extensions.
\end{prop}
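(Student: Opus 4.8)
The plan is to follow the classical strategy for the abelian category of Gieseker-semistable sheaves, paying attention to the two features specific to our setting: that the degree system descends all the way to $\alpha_0$ (so $r=0$), and that the classes $\alpha_i$ are real. Throughout write $\cC_p$ for the proposed category and recall from the discussion after Definition~\ref{def:alphaSs} that, because $r=0$, every nonzero coherent sheaf $G$ of dimension $e\le d$ has $\deg_{\alpha_e}(G)>0$; hence $P_\alpha(G)$ has degree exactly $\dim G$ and positive leading coefficient. This positivity replaces the integrality of Hilbert polynomials used in the classical proof and makes the comparison $\preceq$ (the $m\gg 0$ ordering) a total order on reduced $\alpha$-Hilbert polynomials of dimension-$d$ sheaves. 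I record the see-saw property: for a short exact sequence $0\to A\to B\to C\to 0$ of dimension-$d$ sheaves, $P_\alpha$ and $\deg_{\alpha_d}$ are additive, so $p_\alpha(B)$ is the $\deg_{\alpha_d}$-weighted mean of $p_\alpha(A)$ and $p_\alpha(C)$; consequently $p_\alpha(A)\preceq p_\alpha(B)\preceq p_\alpha(C)$ (or the reverse), and equality of any two forces equality of all three.

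The heart of the argument is the following claim: for any nonzero morphism $\phi\colon E\to F$ in $\cC_p$, the sheaves $\Ker\phi$, $\im\phi$ and $\Coker\phi$ computed in $\Coh(X)$ all lie in $\cC_p$. Since $F$ is pure and $I:=\im\phi$ is a nonzero subsheaf of $F$, the sheaf $I$ is pure of dimension $d$. Viewing $I$ as a quotient of $E$ gives $p_\alpha(I)\succeq p$ by semistability of $E$ and see-saw, while viewing it as a subsheaf of $F$ gives $p_\alpha(I)\preceq p$ by semistability of $F$; hence $p_\alpha(I)=p$, and as any subsheaf of $I$ is a subsheaf of $F$, $I$ is semistable, so $I\in\cC_p$. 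For $\Ker\phi$ (a subsheaf of the pure sheaf $E$, hence $0$ or pure of dimension $d$) semistability is inherited from $E$ and $p_\alpha(\Ker\phi)=p$ follows from see-saw applied to $0\to\Ker\phi\to E\to I\to 0$; thus $\Ker\phi\in\cC_p$.

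The one genuinely delicate point is the purity of $\Coker\phi=F/I$: I must show $I$ is saturated in $F$. If not, its saturation $I^{\mathrm{sat}}$ (the preimage in $F$ of the maximal subsheaf of $F/I$ of dimension $<d$) satisfies $I\subsetneq I^{\mathrm{sat}}\subseteq F$ with $\dim(I^{\mathrm{sat}}/I)=e<d$. Here is exactly where $r=0$ enters: since $e\ge 0=r$, the nonzero sheaf $I^{\mathrm{sat}}/I$ contributes a strictly positive term $\deg_{\alpha_e}(I^{\mathrm{sat}}/I)\,m^e/e!$ to $P_\alpha$, so $P_\alpha(I^{\mathrm{sat}})\succ P_\alpha(I)$ while $\deg_{\alpha_d}(I^{\mathrm{sat}})=\deg_{\alpha_d}(I)$, giving $p_\alpha(I^{\mathrm{sat}})\succ p=p_\alpha(F)$ and contradicting semistability of $F$. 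Hence $F/I$ is pure (or zero); see-saw then gives $p_\alpha(\Coker\phi)=p$, and semistability of $\Coker\phi$ follows by pulling subsheaves back to $F$. (Without the bottom class $\alpha_0$, i.e.\ for a truncated system with $r>0$, this step fails and $\Coker\phi$ can acquire torsion of dimension $<r$; this is why the statement is formulated for the full system $(\alpha_d,\dots,\alpha_0)$.) With the claim in hand, $\cC_p$ is a full additive subcategory of $\Coh(X)$ — closure under finite direct sums is immediate, since a dimension-$d$ subsheaf $G\subseteq E_1\oplus E_2$ fits in $0\to G\cap E_1\to G\to\im(G\to E_2)\to 0$ with both ends of reduced polynomial $\preceq p$ — which contains $0$ and is closed under kernels and cokernels in $\Coh(X)$; by the standard criterion such a subcategory is abelian, with monomorphisms and epimorphisms the injective resp.\ surjective sheaf maps and the canonical map $\mathrm{coim}\,\phi\to\im\phi$ an isomorphism because it already is one in $\Coh(X)$.

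It remains to treat the chain conditions and extensions. Ascending chains of subobjects of a fixed $E\in\cC_p$ are ascending chains of subsheaves, hence stabilize because $\Coh(X)$ is noetherian; note that the real-valuedness of $\deg_{\alpha_d}$ makes the naive ``strictly decreasing positive degrees'' argument unavailable for descending chains. Instead I bound the length directly: in any filtration of $E$ by subobjects each nonzero factor lies in $\cC_p$, so has dimension $d$ and contributes a nonzero effective summand to the additive dimension-$d$ fundamental cycle $\cycle_d(E)$; since $\cycle_d(E)$ is a fixed effective cycle of finite total multiplicity, the length of $E$ is bounded by that multiplicity, whence $\cC_p$ is both artinian and noetherian. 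Finally, for closure under extensions $0\to E'\to E\to E''\to 0$ with $E',E''\in\cC_p$: the reduced polynomial of $E$ is $p$ by see-saw; $E$ is pure because any subsheaf $T\subseteq E$ of dimension $<d$ meets $E'$ trivially (purity of $E'$) and so injects into $E''$ (purity of $E''$), forcing $T=0$; and for any dimension-$d$ subsheaf $F\subseteq E$ the pieces $F\cap E'\subseteq E'$ and $\im(F\to E'')\subseteq E''$ each have reduced polynomial $\preceq p$, so see-saw yields $p_\alpha(F)\preceq p$. The main obstacle is thus the cokernel-purity step, which forces the hypothesis $r=0$; the secondary subtlety is the artinian property, handled via the integral fundamental cycle rather than the real degree $\deg_{\alpha_d}$.
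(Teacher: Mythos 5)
Your proof is correct and follows exactly the route the paper itself intends: the paper gives no written proof, saying only ``As in the case of Gieseker semistability one can prove the following,'' and your argument is that classical proof (cf.\ the standard treatment for Gieseker semistability) carried out in full, with the two necessary adaptations correctly identified --- the hypothesis $r=0$ to force purity of cokernels via saturation, and the integral fundamental cycle $\cycle_d$ replacing integer-valued multiplicities to get the artinian property despite $\deg_{\alpha_d}$ being real-valued (the same non-integrality issue the paper flags in its proof of Proposition \ref{prop:HNproperty}). One cosmetic remark: in the saturation step, when $I^{\mathrm{sat}}=F$ (which happens exactly when $\deg_{\alpha_d}(I)=\deg_{\alpha_d}(F)$) condition (2) of Definition \ref{def:alphaSs} does not apply to $I^{\mathrm{sat}}$, but then $p_\alpha(I^{\mathrm{sat}})\succ p$ contradicts $p_\alpha(F)=p$ directly, so your conclusion stands.
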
 

\begin{Ex}\label{ex:lineBundleSst}
Let $Y \subset X$ be an integral subscheme of dimension $d$ and let $L$ be a line bundle on $Y$. Then $L^{\oplus N}$ is $\alpha$-semistable for  any degree system $\alpha = (\alpha_d,\ldots,\alpha_r)$ and $N \in \N$. 
\end{Ex}

To see the above, complete $\alpha$ to some degree system $\overline{\alpha} = (\alpha_d,\ldots,\alpha_r,\ldots,\alpha_0)$ of ample classes. As $L$ is clearly $\overline{\alpha}$-semistable,
it follows by Proposition \ref{prop:abelianCategory} that $L^{\oplus N}$ is also $\overline{\alpha}$-semistable. Thus $L^{\oplus N}$ is in particular $\alpha$-semistable.
 
The above notion of semistability is more general than the multi-Gieseker semistability introduced in \cite{GrebRossToma-variation} as the following example shows.

\begin{Ex}
We present an example of a threefold $X$ where $\Conv(\{ \alpha \beta \mid \alpha, \beta \in \Amp^1(X) \}) \subsetneq \Amp^2(X)$ and where boundedness of semistability holds for three dimensional coherent sheaves with respect to any system of degree functions defined by ample classes $\alpha_j \in \Amp^j(X)$.
 \end{Ex}
To check this we use \cite[Example 3.11]{FulgerLehmann2017cones}.  Let $E = \cO \oplus \cO \oplus \cO(-1)$ on $\P^1$, let $X = \P(E)$, let $f$ be the class in $N^1(X)$ of a fibre of $X \to \P^1$ and let $\xi$ be the class of $\cO_{\P(E)}(1)$ in $N^1(X)$.  In {\em loc.\! cit.} the authors compute the nef cones and find $$\Nef^1(X) = \langle f, \xi + f \rangle,  \quad  \Nef^2(X) = \langle f \xi, \xi^2 + f\xi \rangle.$$ 
As in \cite[Section 6]{GrebRossToma-variation}, we set $$C^+(X) = \bigcup_{\beta \in \Amp^1(X)} \beta \cdot K^+_\beta, \text{ where } K^+_\beta := \{ \alpha \in N^1(X) \mid \alpha\beta^2 > 0,  \ \alpha^2 \beta > 0 \}.$$
Let $\beta = cf + \xi \in \Amp^1(X)$ (so $c > 1$). Using the Grothendieck relation we get $\xi^3 = -\xi^2 f = -1$, and a direct computation shows that
$$K^+_\beta = \{ af + b\xi \in N^1(X) \mid b > 0, \ 2a+(c-1)b > 0 \},$$ 
whence we obtain $C^+(X) = \Amp^2(X)$.

One easily sees that $\{ \alpha \beta \mid \alpha, \beta \in \Amp^1(X) \}$ is the interior of the convex cone $\langle f\xi, \xi^2 + 2f\xi \rangle$ and is thus strictly smaller than $\Amp^2(X)$.

Finally the fact that boundedness of slope-semistability holds for three dimensional coherent sheaves with respect to $\alpha \in C^+(X)$ is the content of \cite[Theorem 6.8]{GrebRossToma-variation}.

\begin{rem}\label{rem:Rudakov}
The notion of $\alpha$-semistability fits within the broader context of algebraic stability conditions introduced by Rudakov \cite{rudakov1997stability}. We consider $\alpha = (\alpha_d,\ldots,\alpha_0)$ a degree system of ample classes as above, and let $\cA = \Coh_d(X)$ be the abelian category of coherent sheaves on $X$ of dimension less or equal than $d$. We can define a preorder on $\cA$ as follows: For non-zero sheaves $E, F \in \cA$, denote
\begin{align*}
    E \asymp F :{}& \text{if } p_\alpha(E) = p_\alpha(F),\\
   E \prec F :{}&  \text{if } \deg P_\alpha(E) > \deg P_\alpha(F), \text{ or if }\deg P_\alpha(E) = \deg P_\alpha(F) \text{ and }\\
   {}& p_\alpha(E) < p_\alpha(F).
\end{align*}
Condition $p_\alpha(E) < p_\alpha(F)$ above is short for:  $p_\alpha(E,m) < p_\alpha(F,m)$ for $m \gg 0$. {We also write $E \preceq F$ when $E \prec F$ or $E \asymp F$.}

This closely resembles Rudakov's construction in  \cite[Section 2]{rudakov1997stability}, except that his polynomial functions are assumed to be integer-valued while the $\alpha$-Hilbert polynomials here are not so. One can check as in \cite[Section 2]{rudakov1997stability} that the above preorder satisfies the seesaw property and defines a stability structure on $\cA$ (in the sense of Rudakov). 
\end{rem}

\begin{rem}\label{rem:Bayer}
Here we see that $\alpha$-semistability with respect to $\alpha = (\alpha_n,\ldots,\alpha_0)$ is a particular example of polynomial semistability as defined by Bayer in \cite{bayer2009polynomial}.

We consider the abelian category $\cA = \Coh(X)$ of coherent sheaves on $X$. Choose complex numbers $\rho_0, \ldots,\rho_n$ in the upper half-plane
\[
    \H = \{ z \in \C \mid z \in \R_{> 0} \cdot e^{i\pi \phi(z)}, \phi(z) \in (0,1] \}
\]
such that $1 > \phi(\rho_0)> \ldots > \phi(\rho_n)$. For $E \in \Coh(X)$ we define its central charge
\[
	Z_E(m) = \sum_{i=0}^n \rho_i \deg_{\alpha_i}(E) m^i \in \C[m]
\]
with respect to the complete degree system $\alpha = (\alpha_n,\ldots,\alpha_0)$. This extends to give a group homomorphism $Z : K(X) \to \C[m]$.

For any non-zero $E \in \Coh(X)$, note that $Z_E(m) \in \H$ for $m \gg 0$. This induces a function germ,
\[
	\phi_E : (\R \cup \{ + \infty \}, + \infty) \to \R
\]
defined such that $Z_E(m) \in \R_{> 0} \cdot e^{i\pi \phi_E(m)}$ for $m \gg 0$. A non-zero sheaf $E \in \Coh(X)$ is called \textit{Z-semistable} if for all subsheaves $0 \neq F \subset E$, we have $\phi_F \preceq \phi_E$, i.e., $\phi_F(m) \leq \phi_E(m)$ for $m \gg 0$. The group homomorphism $Z$ is called a \textit{polynomial stability function}  in \cite[Definition 2.3.2]{bayer2009polynomial}. A straightforward computation shows  that $\phi_F \preceq \phi_E$ if and only if $p_\alpha(F) \le p_\alpha(E)$, therefore $Z$-semistability is equivalent to our notion of $\alpha$-semistability.
\end{rem}

For later use, here we observe that given any $\alpha$-semistability condition, this induces in particular a notion similar to the classical slope-semistability by using truncated reduced $\alpha$-Hilbert polynomials. If $p=m^d+c_{d-1}m^{d-1}+\ldots+c_rm^r$ is a polynomial with $r < d$ and $s$ is an integer such that $r \le s < d$, then we set
$$p_s=m^d+c_{d-1}m^{d-1}+\ldots+c_sm^s$$ to be 
the upper truncation of $p$ to degree $s$.

\begin{Def}
Let $r \le s < d$ be non-negative integers and $\alpha = (\alpha_d,\ldots,\alpha_r)$ a degree system of ample classes. We say a coherent sheaf $E$ of dimension $d$ on $X$ is $(\alpha,s)$-semistable if $E$ is semistable with respect to $(\alpha_d,\ldots,\alpha_s)$ (see Definition \ref{def:alphaSs}). 
\end{Def}

For $\alpha = (\alpha_d,\ldots,\alpha_r)$, it is clear that $(\alpha,r)$-semistability is the same as $\alpha$-semistability and that the $(\alpha,d-1)$-semistability coincides with the notion of slope-semistability defined with respect to the slope $\mu^{\alpha_{d-1}}_{\alpha_d}$ (see below Definition \ref{def:alphaSs}).

\subsection{Harder-Narasimhan property}\label{subsec:HN property}

In the following $\alpha = (\alpha_d,\ldots,\alpha_0)$ is a degree system of ample classes. We will see below in Proposition \ref{prop:HNproperty} that the notion of $\alpha$-semistability satisfies the \textit{Harder-Narasimhan (HN) property}, meaning that any pure sheaf $E$ of dimension $d$ on $X$ admits a unique filtration (called the Harder-Narasimhan filtration)
\[
    0 = E_0 \subset E_1 \subset \cdots \subset E_m = E
\]
such that each factor $E_i/E_{i-1}$ is $\alpha$-semistable of dimension $d$ with 
\[
    p_\alpha(E_1/E_0) > \cdots > p_\alpha(E_m/E_{m-1}).
\]

It is clear that showing the HN property for $\alpha$ will imply that semistability with respect to any truncation $(\alpha_d,\ldots,\alpha_r)$ of $\alpha$ also satisfies the HN property. 

Due to the discussion in Remark \ref{rem:Rudakov}, there is a preorder $\prec$ on $\Coh_d(X)$ induced by $\alpha$. To show the HN property is satisfied we will apply Rudakov's result \cite[Theorem 2]{rudakov1997stability}. As $\Coh_d(X)$ is noetherian, it remains to check that $(\Coh_d(X),\prec)$ is {\em weakly-artinian}, i.e. there are no infinite chains of (strict) inclusions 
    \[ 
        \ldots \subset E_{i+1} \subset E_i \subset \ldots \subset E_2 \subset E_1
    \]
   in $\Coh_d(X)$ with $E_i \preceq  E_{i+1}$ for all $i$.

The proof we give here is very similar to that of Proposition 2.6 in \cite{rudakov1997stability}, except that our $\alpha$-Hilbert polynomials are not integer-valued.

\begin{prop}\label{prop:HNproperty}
The notion of $\alpha$-semistability has the Harder-Narasimhan property.
\end{prop}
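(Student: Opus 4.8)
The plan is to apply Rudakov's criterion \cite[Theorem 2]{rudakov1997stability} for the existence of Harder-Narasimhan filtrations in a preordered abelian category. The hypotheses of that theorem are that the category be noetherian and weakly-artinian with respect to the preorder $\prec$. Since $\Coh_d(X)$ is noetherian (being a Serre subcategory of the noetherian category $\Coh(X)$), the only thing left to verify is the weak-artinian property: that there is no infinite chain of strict inclusions $\ldots \subset E_{i+1} \subset E_i \subset \ldots \subset E_1$ in $\Coh_d(X)$ with $E_i \preceq E_{i+1}$ for all $i$. This is exactly the point emphasized right before the statement, and it is where essentially all the work lies.

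First I would record what the condition $E_{i} \preceq E_{i+1}$ means for a chain of subsheaves, using the definition of $\prec$ from Remark \ref{rem:Rudakov}. Since $E_{i+1} \subset E_i$, the dimensions satisfy $\dim E_{i+1} \le \dim E_i$, and $\deg P_\alpha$ is governed by the dimension of support. The relation $E_i \preceq E_{i+1}$ forces either $\deg P_\alpha(E_i) > \deg P_\alpha(E_{i+1})$ (strictly smaller support dimension as we pass up the chain) or equality of degrees together with $p_\alpha(E_i) \le p_\alpha(E_{i+1})$. Because the support dimension can only drop finitely many times along an infinite chain, after discarding a finite initial segment I may assume all the $E_i$ have a common dimension $e$ with $r \le e \le d$, so that $\deg P_\alpha(E_i) = e$ for all $i$ and the constraint becomes purely about reduced polynomials: $p_\alpha(E_i) \le p_\alpha(E_{i+1})$ for all $i$, i.e. the reduced $\alpha$-Hilbert polynomials form a nondecreasing sequence in the lexicographic-for-large-$m$ order.

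Next I would extract numerical monotonicity. Writing $p_\alpha(E_i) = m^e + c^{(i)}_{e-1} m^{e-1} + \ldots$, the coefficients are built from the ratios $\deg_{\alpha_j}(E_i)/\deg_{\alpha_e}(E_i)$. The key structural fact is that $\deg_{\alpha_e}$ is a \emph{positive} degree function on $e$-dimensional sheaves (ampleness of $\alpha_e$ pairs positively with the pseudoeffective support class), and that along a descending chain of subsheaves with fixed support dimension these leading degrees $\deg_{\alpha_e}(E_i)$ form a nonincreasing sequence of positive reals, hence stabilize; thereafter the $E_i/E_{i+1}$ are supported in dimension $< e$. One then passes to the next coefficient and argues that $\deg_{\alpha_{e-1}}(E_i)$ behaves monotonically, and so on down the line. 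This is the same bookkeeping as in \cite[Proposition 2.6]{rudakov1997stability}; the only novelty, flagged in the excerpt, is that the $\alpha$-Hilbert polynomials are \emph{not} integer-valued, so one cannot simply invoke discreteness of $\Z$ to force a nondecreasing-then-bounded sequence of rational numbers to stabilize.

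The main obstacle is therefore precisely this failure of integrality: to rule out an infinite strictly decreasing chain I need a replacement for the discreteness argument. I expect the resolution to come from the Grothendieck-type boundedness established earlier in the paper, namely Theorem \ref{th:boundednessCriterion} and Corollary \ref{cor:Grothendieck}: the subsheaves $E_i$ of the fixed sheaf $E_1$ whose relevant $\alpha$-degrees are bounded form a \emph{bounded} family, and a bounded family of subsheaves takes only finitely many numerical classes, so only finitely many values of $(\deg_{\alpha_d}, \ldots, \deg_{\alpha_r})$ can occur among the $E_i$. Finiteness of the set of occurring polynomials, combined with the monotonicity $p_\alpha(E_i) \le p_\alpha(E_{i+1})$, forces the reduced polynomials — and hence, after controlling the leading degrees, the full $\alpha$-Hilbert polynomials — to stabilize. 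Once $P_\alpha(E_i)$ is eventually constant, the strict inclusions $E_{i+1} \subsetneq E_i$ would give quotients $E_i/E_{i+1}$ with $P_\alpha(E_i/E_{i+1}) = 0$ while being nonzero of dimension $e$, contradicting positivity of $\deg_{\alpha_e}$ on nonzero $e$-dimensional sheaves. This contradiction establishes the weak-artinian property and, with Rudakov's theorem, completes the proof.
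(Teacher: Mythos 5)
Your skeleton is the same as the paper's (Rudakov's criterion, noetherianity of $\Coh_d(X)$, reduction of weak-artinianity to a chain of constant dimension, and a final contradiction via positivity of $P_\alpha$), but two of your steps do not hold as written. First, the stabilization of the leading degrees: a nonincreasing sequence of positive \emph{reals} does not stabilize, and since $\alpha_e$ is an arbitrary real ample class the values $\deg_{\alpha_e}(E_i)$ lie in no discrete set. The correct justification — and what makes the corresponding sentence in the paper's proof ``clear'' — is that $\deg_{\alpha_e}(E_i)=\alpha_e\cdot[\cycle_e(E_i)]$ and the support cycles of the subsheaves $E_i$ are subcycles of $\cycle_e(E_1)$, i.e.\ have bounded integer multiplicities on a fixed finite set of prime components; hence only finitely many values of $\deg_{\alpha_e}$ occur along the chain, and a nonincreasing sequence through a finite set is stationary.

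Second, and more seriously, the boundedness detour is both unjustified and unnecessary. Theorem \ref{th:boundednessCriterion} and Corollary \ref{cor:Grothendieck} apply to \emph{dominated} families, and your $E_i$ are subsheaves of $E_1$, not quotients of a fixed sheaf. To invoke those results you would have to pass to the quotients $E_1/E_i$ and bound all their degrees $\deg_{\alpha_j}(E_1/E_i)$ from above, i.e.\ bound all $\deg_{\alpha_j}(E_i)$ from below; but the lexicographic monotonicity $p_\alpha(E_i)\le p_\alpha(E_{i+1})$ only controls coefficients down to the first level at which the polynomials strictly increase and gives no lower bound below that level, so ``the $E_i$ form a bounded family taking finitely many numerical classes'' is not established. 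Fortunately you do not need it: once $\deg_{\alpha_e}(E_i)$ is constant, the inequalities $p_\alpha(E_i)\le p_\alpha(E_{i+1})$ upgrade to $P_\alpha(E_i)\le P_\alpha(E_{i+1})$, while additivity and positivity of the complete system give, for any strict inclusion,
\[
P_\alpha(E_i)=P_\alpha(E_{i+1})+P_\alpha(E_i/E_{i+1})>P_\alpha(E_{i+1}),
\]
an immediate contradiction — no eventual constancy of $P_\alpha$ or finiteness of classes is required. (Note also that the quotient $E_i/E_{i+1}$ need not have dimension $e$ as you asserted; what one uses is that $P_\alpha(Q)>0$ for \emph{every} nonzero $Q$, of whatever dimension, which is exactly why the proposition is stated for a complete degree system $(\alpha_d,\ldots,\alpha_0)$ going down to $\alpha_0$.) This one-line finish is the paper's argument, and it has the further merit of keeping Proposition \ref{prop:HNproperty} independent of Section \ref{sec:boundedness}: the paper needs Theorem \ref{th:boundednessCriterion} only later, for the \emph{relative} Harder--Narasimhan filtration of Theorem \ref{thm:HN}.
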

\begin{proof}
According to the above discussion, it is enough to check that $\Coh_d(X)$ is weakly-artinian with respect to the preorder $\prec$. 
Suppose, by contradiction, that there is an infinite chain of subsheaves
 \[ 
        \ldots \subsetneq  E_{i+1} \subsetneq E_i \subsetneq \ldots \subsetneq E_2 \subsetneq E_1
    \]
in $\Coh_d(X)$ with strict inclusion at every step and with $E_i \preceq E_{i+1}$ for all $i$. 
Without loss of generality, we may assume that all the $E_i$ have dimension $d$. It is clear that the sequence of $\alpha_d$-degrees of the sheaves $E_i$ is stationary, so we may as well suppose that it is constant. Under these assumptions we deduce
from the inequalities
  $$p_{\alpha}(E_1) \le p_{\alpha}(E_{2}) \le p_{\alpha}(E_{3}) \le \ldots$$
   a sequence of inequalities for the $\alpha$-Hilbert polynomials
  $$P_{\alpha}(E_1) \le P_{\alpha}(E_{2}) \le P_{\alpha}(E_{3}) \le \ldots.$$
Now the equality $P_{\alpha}(E_1) = P_{\alpha}(E_{2}) + P_{\alpha}(E_{1}/E_2) $ implies $P_{\alpha}(E_1) > P_{\alpha}(E_{2}) $, which is a contradiction.
\end{proof}

%%%%%%%%%%%%%%%%%%%%%%%%%

\section{A boundedness criterion}\label{sec:boundedness}

In this section we show versions of Grothendieck's boundedness criteria from  \cite[Section 2]{Grothendieck-theHilbertScheme} adapted to the case when the classical Hilbert polynomial is replaced by an $\alpha$-Hilbert polynomial as in Section \ref{sect:Preliminaries}.

Following \cite[Section 1.7]{HL} we will use a slightly simplified definition for boundedness of families of coherent sheaves. This definition is more restrictive than the one in \cite{Grothendieck-theHilbertScheme} but it will suit our purposes. 

\begin{Def}
    A set $\gF$ of isomorphism classes of coherent sheaves on $X$ is said to be {\em bounded}, if there exists a scheme $S$ of finite type over $k$ and a coherent sheaf $E$ on $X\times S$ such that $\gF$ is contained in the set $$ \{ [E|_{X\times \{ s\}}] \ ; \ s \ \text{ closed point of} \ S\}.$$
    Here we identified the fiber $X\times \{ s\}$ of the second projection $X\times S\to S$ with $X$ by the first projection. 
\end{Def}

\begin{rem}
\label{rem:boundedness}
It is easy to see that if $S$ is a scheme of finite type over $k$ and if $\gF$ is a set of isomorphism classes of coherent sheaves on the fibers of the projection $X\times S\to S$ which is bounded in the above sense, then there is a scheme $T$ of finite type over $S$ and a coherent sheaf $E$ on $X\times T$ such that $\gF$ seen now as a set of isomorphism classes of coherent sheaves on the fibers of  $X\times T\to T$ via $T\to S$ is contained in the set of isomorphism classes of coherent sheaves on the fibers of $(X\times T)/T$ defined by $E$. 
\end{rem}

\begin{Lem}\label{lem: elementary} 
Let $V$ be a real vector space of dimension $m$, let  $e_1,\ldots,e_m$ be a basis of $V$  and let  $v=\sum_{j=1}^m a_je_j\in V$ be such that $a_j>0$ for all $j=1,\ldots,m$. Then for any choice of real constants $c_1,\ldots, c_m,c$, the subset $K=K_{c_1,\ldots, c_m,c}:=\{ f\in V^* \ | \ f(e_j)\ge c_j \ \text{ for all } j, \  f(v)\le c\}$ of the dual space $V^*$ is compact. 
\end{Lem}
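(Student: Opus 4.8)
The plan is to invoke the Heine--Borel theorem: since $V^*$ is a finite-dimensional real vector space, a subset is compact if and only if it is closed and bounded. So the task reduces to verifying these two properties for $K$. To make the computations transparent I would identify $V^*$ with $\R^m$ via the dual basis $e_1^*,\ldots,e_m^*$, so that a functional $f$ corresponds to the tuple $(x_1,\ldots,x_m)$ with $x_j = f(e_j)$. Under this identification $f(v) = \sum_{j=1}^m a_j f(e_j) = \sum_{j=1}^m a_j x_j$, and the two defining conditions of $K$ become the system of linear inequalities $x_j \ge c_j$ for all $j$ together with $\sum_{j=1}^m a_j x_j \le c$.

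Closedness is immediate and I would dispatch it first: each defining condition is a non-strict inequality involving a continuous (indeed linear) function of $f$, so $K$ is a finite intersection of closed half-spaces and hence closed.

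The heart of the argument is boundedness, and this is the one step where the hypotheses are genuinely used. The lower bounds $x_j \ge c_j$ are given outright. To produce upper bounds I would fix an index $k$ and exploit the positivity of the coefficients: from $x_j \ge c_j$ and $a_j > 0$ we get $a_j x_j \ge a_j c_j$ for every $j \ne k$, and therefore
\[
    a_k x_k = \Big(\sum_{j=1}^m a_j x_j\Big) - \sum_{j\ne k} a_j x_j \le c - \sum_{j\ne k} a_j c_j,
\]
so that $x_k \le a_k^{-1}\big(c - \sum_{j\ne k} a_j c_j\big)$. Since this holds for every $k$, each coordinate $x_k$ is confined to a bounded interval, so $K$ is contained in a box and is bounded. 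Combining closedness and boundedness with Heine--Borel then yields compactness.

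The only delicate point, and the one I would flag as the crux, is this last upper-bound estimate: it relies essentially on all the $a_j$ being strictly positive. Without that assumption the single constraint $f(v) \le c$ would fail to control the coordinates from above (it could even leave some coordinate completely free), and $K$ would in general be unbounded. Everything else is routine.
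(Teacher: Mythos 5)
Your proof is correct and takes essentially the same approach as the paper: both express $f$ in the dual basis and derive the coordinatewise bounds $c_j \le f(e_j) \le a_j^{-1}\bigl(c - \sum_{i\ne j} a_i c_i\bigr)$, exploiting the strict positivity of the $a_j$ exactly as you do. The only difference is that you make explicit the closedness and Heine--Borel steps, which the paper leaves implicit.
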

\begin{proof}
If we express an element $f$ of $K$ in terms of the dual basis as $f=\sum_{j=1}^m b_je_j^*$, we immediately obtain bounds
$$c_j\le b_j\le \frac{c-\sum_{i\ne j}^m a_ic_i}{a_j}$$
for each coefficient $b_j$ of $f$.
\end{proof}

\begin{Lem}\label{lem:Chow}
Let $v \in \Amp^d(X)$ and $\gY$ be a set of pure reduced closed subschemes of dimension $d$ of $X$.  If $\deg_{v}$ is upper bounded on $\gY$,  then $\gY$ is bounded.
\end{Lem}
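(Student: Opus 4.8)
The plan is to reduce the hypothesis that $\deg_v$ is bounded above on $\gY$ to a bound on the ordinary degree $h^d\cdot[Y]$, and then to invoke the classical boundedness of families of cycles of bounded degree. The first step is to identify $\deg_v(Y)$ with an intersection number. For $Y\in\gY$, reduced and pure of dimension $d$, the sheaf $\cO_Y$ is supported in codimension $n-d$, so $\ch(\cO_Y)$ has no component in codimension $<n-d$, and its codimension-$(n-d)$ component is the fundamental class $[Y]\in N^{n-d}(X)_\R$ (reducedness forces all multiplicities to be $1$). Since $v\in\Amp^d(X)$ has codimension $d$ and $\Todd_X=1+\ldots$, extracting the codimension-$n$ part of $\ch(\cO_Y)\,v\,\Todd_X$ leaves exactly $v\cdot[Y]$, so $\deg_v(Y)=v\cdot[Y]$, the pairing of $v$ with the effective class $[Y]\in\Pseff^{n-d}(X)$.

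Next I would carry out the key comparison. Writing $h$ for the fixed polarization, the class $h^d$ lies in $\Nef^d(X)$ since it pairs non-negatively with every effective $d$-cycle. Because $v$ lies in the interior $\Amp^d(X)$ of $\Nef^d(X)$, there is an $\epsilon>0$ with $v-\epsilon h^d\in\Nef^d(X)$. As $\Nef^d(X)$ is by definition dual to $\Pseff^{n-d}(X)$ and $[Y]$ is effective, we get $(v-\epsilon h^d)\cdot[Y]\ge 0$, hence
\[
    h^d\cdot[Y]\ \le\ \tfrac{1}{\epsilon}\,v\cdot[Y]\ =\ \tfrac{1}{\epsilon}\deg_v(Y)\ \le\ \tfrac{C}{\epsilon},
\]
where $C$ is the assumed upper bound of $\deg_v$ on $\gY$. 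Thus the members of $\gY$ have uniformly bounded classical degree $h^d\cdot[Y]$ with respect to the embedding defined by $h$.

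Finally I would conclude by classical boundedness. The effective $d$-dimensional cycles on $X$ of $h$-degree $\le C/\epsilon$ form a bounded family, being parametrized by the finitely many Chow varieties $\mathrm{Chow}_{d,e}(X)$ with $e\le C/\epsilon$. Each $Y\in\gY$ is reduced and equidimensional, so it is recovered as the reduced support of its associated (multiplicity-free) cycle; stratifying the universal cycle over this finite-type base and taking reduced supports fibrewise yields, by noetherian induction, a finite-type scheme carrying a coherent sheaf whose fibres realise the $\cO_Y$. This exhibits $\gY$ as bounded.

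The step I expect to be the main obstacle is the last one: the Chow variety parametrizes cycles rather than subschemes, so passing from boundedness of the classes $[Y]$ to boundedness of the structure sheaves $\cO_Y$ requires the standard but nonformal comparison between the cycle space and the Hilbert scheme. A clean alternative, which avoids this, is to bound the Castelnuovo--Mumford regularity $\reg(\cO_Y)$ in terms of the degree and dimension for reduced equidimensional subschemes; this caps the possible Hilbert polynomials at finitely many values and places all the $\cO_Y$ inside a single Hilbert scheme of finite type, directly yielding boundedness.
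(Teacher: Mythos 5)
Your proof is correct, and its overall skeleton matches the paper's: identify $\deg_v(Y)$ with the intersection number $v\cdot[Y]$, deduce a uniform bound on the classical degree $h^d\cdot[Y]$, and conclude by the classical boundedness of reduced pure-dimensional subschemes of bounded degree. The genuine divergence is in the middle step. The paper runs it through its elementary compactness lemma (Lemma \ref{lem: elementary}): choosing a basis $e_1,\ldots,e_m$ of $N^d(X)_\R$ consisting of ample $d$-classes with $v$ a positive combination of them, the slab $\{\tau\in N^{n-d}(X)_\R \mid v\cdot\tau\le c,\ e_j\cdot\tau\ge 0\}$ is compact and contains every class $[Y]$, whence $\deg_{h^d}$ is bounded on $\gY$. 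You replace this with the more direct observation that $v-\epsilon h^d\in\Nef^d(X)$ for some $\epsilon>0$ by interiority of $v$, so that duality with $\Pseff^{n-d}(X)$ yields the explicit estimate $h^d\cdot[Y]\le \deg_v(Y)/\epsilon$; both arguments ultimately exploit the effectivity of $[Y]$ (the paper via $e_j\cdot[Y]\ge 0$). For this particular lemma your route is cleaner and gives an explicit constant, but the paper's compactness lemma is not an idiosyncrasy: it is reused in the proof of Lemma \ref{lem:boundednessI}, where the relevant dual classes are not effective and one only has coordinate-wise lower bounds $c_j$ that may be negative, a situation with no direct analogue of your $\epsilon$-trick. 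Finally, regarding the step you flag as the main obstacle: the paper simply cites Grothendieck's Lemme 2.4 (his ``lemme de Chow''), which is precisely the statement that reduced subschemes of bounded degree form a bounded family, so the cycle-to-scheme comparison you sketch via Chow coordinates and flattening stratification is the content of a standard citable result and need not be re-proved; your Castelnuovo--Mumford regularity alternative would also work but is heavier than necessary.
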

\begin{proof}
Let $c > 0$ be an upper bound of $\deg_{v}$ on $\gY$. 
Choose a basis $e_1,\ldots,e_m$ of $N^d(X)_\R$ such that every $e_i$ is an ample $d$-class and $v=\sum_{j=1}^m a_je_j$ with $a_j>0$ for $j=1,\ldots,m$.  By Lemma \ref{lem: elementary} the set $$K := \{ \tau \in N^{n-d}(X)_\R \mid v\cdot \tau \leq c,  \ e_j \cdot \tau \geq 0 \text{ for all }j\}$$
is compact.  It follows that the degree function $\deg_{h^d}$ is bounded on $\gY$.  We conclude by Chow's Lemma, cf. \cite[Lemme 2.4]{Grothendieck-theHilbertScheme}, that $\gY$ is bounded.
\end{proof}

In the following we say that a set $\gF$ of isomorphism classes of coherent sheaves on $X$ is \textit{dominated} if there is some coherent sheaf $G$ on $X$ such that for any $[E] \in \gF$ there is a quotient $G \twoheadrightarrow E$.

\begin{Lem}\label{lem:boundednessI}
Let $\gF$ be a dominated set of isomorphism classes of coherent sheaves on $X$ of dimension at most $d$,  let $\alpha_d \in \Amp^d(X)$ and $\alpha_{d-1} \in \Amp^{d-1}(X)$.  If the degree function $\deg_{\alpha_d}$ is upper bounded on $\gF$,  then the degree function $\deg_{\alpha_{d-1}}$ is lower bounded on $\gF$.  If the latter function is upper bounded as well,  then the set $\gF_{(d)} := \{ [F_{(d)}] \mid [F] \in \gF  \}$ is bounded. 
\end{Lem}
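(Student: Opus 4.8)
The plan is to reduce to the situation where the dominating sheaf is a twist of a trivial bundle, so that every member of $\gF$ becomes globally generated after one fixed twist, and then to combine the positivity of globally generated sheaves with the two preliminary lemmas. Since every coherent sheaf is a quotient of some $\cO_X(-N)^{\oplus m}$ and a quotient of a quotient is again a quotient, I may replace $G$ by $\cO_X(-N)^{\oplus m}$; thus for each $[F]\in\gF$ the twist $F(N):=F\otimes\cO_X(N)$ is globally generated. Write $C$ for an upper bound of $\deg_{\alpha_d}$ on $\gF$ and note $\deg_{\alpha_d}(F)=\deg_{\alpha_d}(F_{(d)})$ and $\deg_{\alpha_{d-1}}(F)=\deg_{\alpha_{d-1}}(F_{(d)})+\deg_{\alpha_{d-1}}(N_d(F))$, the last term being $\ge 0$ because $N_d(F)$ has dimension $<d$, so that its leading Chern character component is an effective cycle class which pairs non-negatively with the nef class $\alpha_{d-1}$.

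For the first assertion it therefore suffices to bound $\deg_{\alpha_{d-1}}(F_{(d)})$ from below. The key input is that for the globally generated sheaf $F_{(d)}(N)$ the Chern character component $\ch_{n-d+1}(F_{(d)}(N))$ pairs non-negatively with $\alpha_{d-1}\in\Nef^{d-1}(X)$ (positivity of Chern classes of globally generated sheaves), so that $\deg_{\alpha_{d-1}}(F_{(d)}(N))\ge 0$. Untwisting by $e^{-Nh}$, the difference $\deg_{\alpha_{d-1}}(F_{(d)})-\deg_{\alpha_{d-1}}(F_{(d)}(N))$ involves only the $d$-dimensional support cycle $\ch_{n-d}(F_{(d)})$, which is controlled by $\deg_{\alpha_d}(F)\le C$ through Lemma \ref{lem:Chow}; hence $\deg_{\alpha_{d-1}}(F_{(d)})$ is bounded below. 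The argument applies verbatim with $\alpha_{d-1}$ replaced by any ample $(d-1)$-class and to any dominated family satisfying the hypothesis, a fact I reuse below.

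For the second assertion I first note that $\deg_{\alpha_d}\le C$ bounds the reduced supports (Lemma \ref{lem:Chow}) and the generic ranks, so the leading classical coefficient $\deg_{h^d}(F_{(d)})$ is bounded. To bound the second classical coefficient, set $\gamma=\ch_{n-d+1}(F_{(d)})$ and choose a basis $e_1,\dots,e_t$ of $N^{d-1}(X)_\R$ consisting of ample classes with $\alpha_{d-1}=\sum_{j}a_je_j$, $a_j>0$, as in the proof of Lemma \ref{lem:Chow}. The first assertion, applied to the dominated family $\gF_{(d)}$ and to each $e_j$, gives lower bounds $\gamma\cdot e_j\ge c_j$ up to bounded support corrections, while the new hypothesis gives $\gamma\cdot\alpha_{d-1}\le c$, again up to bounded corrections. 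By Lemma \ref{lem: elementary}, $\gamma$ is then confined to a compact subset of $N^{n-d+1}(X)$, whence $\gamma\cdot h^{d-1}$, and therefore $\deg_{h^{d-1}}(F_{(d)})$, is bounded.

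Finally I bound the maximal slope $\mu_{h,\max}(F_{(d)})$: for a saturated subsheaf $H'\subseteq F_{(d)}$ the quotient $F_{(d)}/H'$ is purely $d$-dimensional and is again a quotient of $G$, so applying the first assertion to the dominated family $\{[F_{(d)}/H']\}$ with the ample class $h^{d-1}$ bounds $\deg_{h^{d-1}}(F_{(d)}/H')$ from below; together with the upper bound on $\deg_{h^{d-1}}(F_{(d)})$ just obtained and with $\deg_{h^d}(H')\ge 1$, this bounds $\mu_h(H')$ from above uniformly. Since $\mu_{h,\max}(F_{(d)})$ and the leading coefficient are bounded and the $F_{(d)}$ are purely $d$-dimensional quotients of the fixed sheaf $G$ of bounded rank, classical boundedness via Le Potier--Simpson regularity estimates (cf. \cite{HL}) shows that $\gF_{(d)}$ is bounded. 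The step I would scrutinize most, and the main obstacle, is the positivity input $\deg_{\alpha_{d-1}}(V)\ge 0$ for globally generated $V$: one must make precise that $\ch_{n-d+1}(V)$ pairs non-negatively with the nef class $\alpha_{d-1}$ for sheaves supported on possibly singular $d$-dimensional loci, for instance by passing through general sections to the nefness of $c_1$ of $V$ along its support.
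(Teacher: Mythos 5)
Your proof of the first assertion rests on a positivity claim that is false. You assert that for a globally generated sheaf $V$ of dimension $d$ the component $\ch_{n-d+1}(V)$ pairs non-negatively with every nef $(d-1)$-class, hence $\deg_{\alpha_{d-1}}(V)\ge 0$ up to bounded corrections. Fulton--Lazarsfeld-type positivity applies to the Chern classes $c_i$ of globally generated \emph{vector bundles}; it does not apply to Chern character components, and it fails badly for torsion sheaves, which is exactly the case the lemma must cover (sheaves of dimension $d<n$). Concretely, take $n=3$, $d=1$, $X=\P^3$, and $V=\cO_C$ for a smooth curve $C$ of degree $e$ and genus $g$: then $V$ is globally generated, yet Grothendieck--Riemann--Roch gives $\ch_3(\cO_C)=(1-g-2e)[\mathrm{pt}]$, which is negative for every curve of positive degree, and with $\alpha_0\in\Amp^0(X)$ one gets $\deg_{\alpha_0}(\cO_C)=\chi(\cO_C)<0$ as soon as $g\ge 2$. (Even in the locally free case the heuristic fails at the level of Chern characters: $\Omega_{\P^2}(2)$ is globally generated with $\ch_2=-\tfrac12$.) Your closing remark already points at the weak spot, but the suggested repair --- ``nefness of $c_1$ of $V$ along its support'' --- is where all the real work hides: relating $\ch_{n-d+1}(\iota_*V)$ to $c_1$ of $V$ on its support produces Riemann--Roch correction terms on possibly singular, non-reduced, reducible supports, and controlling them forces one to stratify the support cycle. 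This is precisely what the paper does instead: it notes that on an \emph{integral} support $Y$ the sheaf $\cO_Y(-m)^N$ is slope-semistable (Example \ref{ex:lineBundleSst}), so any quotient $F$ satisfies $\mu^{\alpha_{d-1}}_{\alpha_d}(F)\ge \mu^{\alpha_{d-1}}_{\alpha_d}(\cO_Y(-m)^N)$, with the right-hand side bounded below because the supports form a bounded family by Lemma \ref{lem:Chow}; the general case is then reached by a double induction on the maximal multiplicity $\nu(F)$ and the number of components $e(F)$ of $\cycle_d(F)$, using the sequences $0\to\cI_{Y_{e(F)}}F\to F\to F_{Y_{e(F)}}\to 0$ and $0\to\cI_{Y_{red}}F\to F\to F_{Y_{red}}\to 0$. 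Without some such induction your argument has no route through non-integral supports, so the gap is structural, not cosmetic.

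Your treatment of the second assertion is closer to the mark: the reduction $\deg_{\alpha_{d-1}}(F)\ge\deg_{\alpha_{d-1}}(F_{(d)})$, the choice of an ample basis of $N^{d-1}(X)_\R$ with $\alpha_{d-1}$ a positive combination, and the compactness from Lemma \ref{lem: elementary} to bound $\deg_{h^{d-1}}$ are exactly the paper's steps (note the paper runs them for $\gF$ itself, quoting the first assertion for arbitrary ample $(d-1)$-classes, as you do). But your endgame via $\mu_{h,\max}$ and Le Potier--Simpson regularity is both unnecessary and under-justified: those estimates bound sections, and converting them into boundedness for a family whose Hilbert polynomial is only controlled in its top two coefficients is essentially Grothendieck's \emph{Lemme 2.5} again --- which is what the paper cites directly at this point, since a dominated family with bounded $\deg_{h^d}$ and $\deg_{h^{d-1}}$ has bounded $\gF_{(d)}$. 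You should replace the regularity detour by that citation; but the first assertion, on which everything rests, needs the support induction sketched above.
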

\begin{proof}
We may assume that for some $m, N \in \N$ all $[F] \in \gF$ are dominated by $\cO_X(-m)^N$.  

We start by showing that $\deg_{\alpha_{d-1}}$ is lower bounded on $\gF$.  The assertion is clear if $\dim F<d$, so we consider only the case when $\dim F=d$ for all $[F] \in \gF$.  

Let $[F]$ be an element of $\gF$, let $Y=\cycle_d(F)$ be the $d$-dimensional support cycle of $F$ and let $Y=\sum_{i=1}^{e(F)} \nu_iY_i$ be its decomposition into prime cycles.   Since $\deg_{\alpha_{d}}(F)$ is bounded for $[F] \in \gF$,  it follows that in the notation above $e(F)$,  the $\nu_i$-s as well as the prime cycles $Y_i$ are bounded. 

Let $\nu(F)$ be the maximum multiplicity of $\cycle_d(F)$.  %We also let $r = \max \{ e(F) \mid [F] \in \gF \}$.  
We proceed by induction on $\nu(F)$.  First we treat the case $\nu(F)=1$.  Here we argue by induction on $e(F)$.  

Let $e(F) = 1$. 
We may suppose that $F$ is pure.   Indeed, if $F$ is not pure, then it fits in a short exact sequence
\[
	0 \to T(F) \to F \to F_{pure} \to 0,
\]
where $T(F)$ is the torsion of $F$. Hence
\[
	\deg_{\alpha_{d-1}}(F) = \deg_{\alpha_{d-1}}(F_{pure}) + \deg_{\alpha_{d-1}}(T(F)) \geq \deg_{\alpha_{d-1}}(F_{pure}).
\]

We have $$\cO_Y(-m)^N \twoheadrightarrow F$$ and we have seen  in Example \ref{ex:lineBundleSst} \color{black} that $\cO_Y(-m)^N$ is semistable with respect to the slope function defined by $\mu := \deg_{\alpha_{d-1}}/\deg_{\alpha_{d}}$.  As $\deg_{\alpha_{d}}(Y)$ is bounded,  by Lemma \ref{lem:Chow} we get that $\mu(\cO_Y(-m)^N)$ is bounded. Therefore $$\alpha_{d-1}(F) \geq \mu(\cO_Y(-m)^N)\alpha_d(F) ,$$
which combined with the fact that $\alpha_d(F)$ is bounded gives the desired lower bound for $\alpha_{d-1}(F)$.

Now let $e(F) > 1$.   Consider the following short exact sequence
$$0 \to \cI_{Y_{e(F)}}F \to F \to F_{Y_{e(F)}} \to 0.$$
As $F$ is dominated and the cycles $Y_{e(F)}$ are bounded,  the sheaves $F_{Y_{e(F)}}$ and $ \cI_{Y_{e(F)}}F$ are dominated by some $\cO_X(-m')^{N'}$, where $m'$ and $N'$ only depend on $\gF$.  Moreover,  note that $\deg_{\alpha_d}(\cI_{Y_{e(F)}}F),  \deg_{\alpha_d}(F_{Y_{e(F)}}) \leq \deg_{\alpha_d}(F)$.  Hence by the induction hypothesis $\deg_{\alpha_{d-1}}(\cI_{Y_{e(F)}}F)$ and $\deg_{\alpha_{d-1}}(F_{Y_{e(F)}})$ are lower bounded.  It follows that $\deg_{\alpha_{d-1}}(F)$ is also lower bounded, which proves the case $\nu(F)=1$.

Next we let $\nu(F) > 1$.  We have a short exact sequence
$$0 \to \cI_{Y_{red}}F \to F \to F_{Y_{red}} \to 0,$$
where $Y_{red} = \cup_i Y_i$.  Note that $\nu(\cI_{Y_{red}}F),  \nu(F_{Y_{red}}) < \nu(F)$, which allows us to check that $\deg_{\alpha_{d-1}}(F)$ is lower bounded in the same way as in the previous situation.

We now prove that under the supplementary assumption that the degree function $\deg_{\alpha_{d-1}}$ is upper bounded by some constant $c$ on $\gF$,  the set $\gF_{(d)}$ is bounded.  For this we use the fact we have just proved that for any $\beta \in \Amp^{d-1}(X)$,  the degree function $\deg_\beta$ is lower bounded on $\gF$.  Indeed, as in the proof of Lemma \ref{lem:Chow}, we choose a basis $\beta_1,\ldots,\beta_\rho$ of $N^{d-1}(X)_\R$ in $\Amp^{d-1}(X)$ such that $\alpha_{d-1}$ is a linear combination of $\beta_1,\ldots,\beta_\rho$ with positive coefficients.  Let $c_1,\ldots,c_\rho$ be lower bounds for the functions $\deg_{\beta_1},\ldots,\deg_{\beta_\rho}$ on $\gF$. Then Lemma \ref{lem: elementary} tells us that the set  $$K := \{ \tau \in N^{n-d+1}(X)_\R \mid \alpha_{d-1}\cdot \tau \leq c,  \ \beta_j \cdot \tau \geq c_j \text{ for all }j\}$$
is compact. Therefore the degree function $\deg_{h^{d-1}}$ is bounded on $K$, and thus also on $\gF$. By \cite[Lemme 2.5]{Grothendieck-theHilbertScheme} it follows that the set $\gF_{(d)}$ is bounded. 
\end{proof}

\begin{Th}\label{th:boundednessCriterion}
Let $\alpha_j \in \Amp^j(X)$ for $0 \leq j \leq n$,  let $s$ be an integer,  and let $\gF$ be a set of isomorphism classes of coherent sheaves on $X$ satisfying the following conditions:\\
(a) $\gF$ is dominated,\\
 $(b_s)$ the degree functions $\deg_{\alpha_j}$ are upper bounded on $\gF$ for $j \geq s-1$.
 
 Then $\gF_{(s)} := \{ [F_{(s)}] \mid [F] \in \gF  \}$ is bounded and for each ample $(s-2)$-class $\beta$ the degree function $\deg_\beta$ is lower bounded on $\gF$.
\end{Th}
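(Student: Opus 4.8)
The plan is to prove both assertions at once by a descending induction that bounds the quotients $\gF_{(t)} := \{[F_{(t)}]\}$ for $t = n, n-1, \ldots, s$, and then to extract the lower bound from one final application of Lemma \ref{lem:boundednessI}. By Remark \ref{rem:boundedness} and the definition of domination I would first replace the dominating sheaf by some $\cO_X(-m)^N$, so that every $[F]\in\gF$ admits a surjection $\cO_X(-m)^N \twoheadrightarrow F$; then every quotient of $F$ is again dominated by $\cO_X(-m)^N$. For the inductive step I consider, for each $F\in\gF$, the short exact sequence
$$0 \to Q_t \to F_{(t)} \to F_{(t+1)} \to 0, \qquad Q_t := N_{t+1}(F)/N_t(F),$$
where $Q_t$ is either zero or pure of dimension $t$, being a subsheaf of $F_{(t)}$, which has no subsheaf of dimension $<t$. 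Assuming inductively that $\gF_{(t+1)}$ is bounded (the base case $t=n$ uses $\gF_{(n+1)}=\{0\}$), the goal of the step is to bound the family $\{Q_t\}$ and then recombine.

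The degree bounds are the routine part. Additivity gives $\deg_{\alpha_j}(Q_t) = \deg_{\alpha_j}(F_{(t)}) - \deg_{\alpha_j}(F_{(t+1)})$, and the second term is bounded since $\gF_{(t+1)}$ is bounded. Since $N_t(F)$ has dimension $<t$, one has $\deg_{\alpha_t}(F_{(t)}) = \deg_{\alpha_t}(F)$, while $\deg_{\alpha_{t-1}}(N_t(F))\ge 0$ (as $\alpha_{t-1}$ is ample) forces $\deg_{\alpha_{t-1}}(F_{(t)}) \le \deg_{\alpha_{t-1}}(F)$; by hypothesis $(b_s)$, and because $t,t-1\ge s-1$, both $\deg_{\alpha_t}(F)$ and $\deg_{\alpha_{t-1}}(F)$ are upper bounded, so $\deg_{\alpha_t}(Q_t)$ and $\deg_{\alpha_{t-1}}(Q_t)$ are upper bounded. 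Granting that $\{Q_t\}$ is dominated, Lemma \ref{lem:boundednessI} applied with $d=t$ then shows $\{Q_t\}=\{(Q_t)_{(t)}\}$ is bounded. Consequently the Hilbert polynomials $P_h(F_{(t)}) = P_h(Q_t) + P_h(F_{(t+1)})$ take only finitely many values, and since each $F_{(t)}$ is a quotient of $\cO_X(-m)^N$, the family $\gF_{(t)}$ is covered by finitely many Quot schemes and is therefore bounded. This closes the induction and proves that $\gF_{(s)}$ is bounded.

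The one delicate point, and the main obstacle, is the domination of the subsheaf families $\{Q_t\}$ (and below $\{N_s(F)\}$), which are not a priori quotients of a fixed sheaf. Here I would use the inductive boundedness of $\gF_{(t+1)}$ as follows: writing $\psi\colon \cO_X(-m)^N \twoheadrightarrow F_{(t)} \twoheadrightarrow F_{(t+1)}$ for the composite surjection, the kernels $\ker\psi$ occur as the universal kernels over the finitely many Quot schemes that parameterize the quotients $F_{(t+1)}$, and hence form a bounded, and so (after one further twist) dominated, family. Since the surjection $\cO_X(-m)^N \to F_{(t)}$ maps $\ker\psi$ onto $Q_t = \ker(F_{(t)} \to F_{(t+1)})$, the family $\{Q_t\}$ is dominated as well. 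I expect this kernel argument to be the crux, as it is precisely what lets the dimension descend without losing control of the ambient dominating sheaf.

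Finally, for the lower bound, fix $\beta\in\Amp^{s-2}(X)$. As $\gF_{(s)}$ is now bounded, $\deg_\beta(F_{(s)})$ is bounded, so $\deg_\beta(F) = \deg_\beta(N_s(F)) + \deg_\beta(F_{(s)})$ reduces the claim to a lower bound for $\deg_\beta(N_s(F))$. The family $\{N_s(F)\} = \{\ker(F\to F_{(s)})\}$ has dimension $<s$ and is dominated by the same kernel argument applied to $\cO_X(-m)^N \twoheadrightarrow F \twoheadrightarrow F_{(s)}$, and $\deg_{\alpha_{s-1}}(N_s(F)) = \deg_{\alpha_{s-1}}(F) - \deg_{\alpha_{s-1}}(F_{(s)})$ is upper bounded. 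The first assertion of Lemma \ref{lem:boundednessI}, applied with $d=s-1$, then gives that $\deg_\beta$ is lower bounded on $\{N_s(F)\}$ for every ample $(s-2)$-class $\beta$ (its proof produces the lower bound for all ample $(d-1)$-classes, not merely for $\alpha_{d-1}$), which completes the argument.
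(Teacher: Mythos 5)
Your proof is correct and follows essentially the same route as the paper's: both descend through the dimension filtration of $F$, apply Lemma \ref{lem:boundednessI} at each level to dominated pieces whose degrees $\deg_{\alpha_t},\deg_{\alpha_{t-1}}$ are upper bounded by additivity and hypothesis $(b_s)$, establish domination of the subsheaf families (your $Q_t$, the paper's $\mathfrak{N}_d=\{[N_d(F)]\}$) by exactly the same kernel argument applied to the compositions $\cO_X(-m)^N\twoheadrightarrow F_{(t)}\twoheadrightarrow F_{(t+1)}$, and obtain the lower bound on $\deg_\beta$ identically from $0\to N_s(F)\to F\to F_{(s)}\to 0$ together with the first part of Lemma \ref{lem:boundednessI} applied in dimension $s-1$. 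The only differences are organizational: the paper runs an induction on the maximal dimension of members of $\gF$, applying the theorem recursively to $\mathfrak{N}_d$ and reassembling $F_{(s)}$ as an extension $0\to (N_d(F))_{(s)}\to F_{(s)}\to F_{(d)}\to 0$ bounded by \cite[Proposition 1.2 (ii)]{Grothendieck-theHilbertScheme}, whereas you unroll that recursion into a descending induction on $t$ with pure graded pieces $Q_t=N_{t+1}(F)/N_t(F)$ and recombine via finitely many Hilbert polynomials and Quot schemes --- an equivalent and equally valid bookkeeping of the same argument.
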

\begin{proof}
The proof goes exactly as in \cite[Théorème 2.2, p258]{Grothendieck-theHilbertScheme}. 

We argue by induction on the maximum dimension $d$ of the elements of $\gF$.  For $d < 0$ the assertion is trivial,  so let now $d \geq 0$ and assume the statement is true for lower dimension. For the first part we may also assume that $d \ge s$. 

By Lemma \ref{lem:boundednessI} we obtain that $\gF_{(d)}$ is bounded, thus the degree functions $\deg_{\alpha_j}$ are bounded on $\gF_{(d)}$ for $j \geq s-1$. Now for each $[F] \in \gF$, we have a short exact sequence
\[
    0 \to N_d(F) \to F \to F_{(d)} \to 0.
\]

If $E$ is a coherent sheaf which dominates  $\gF$, it follows that the kernels of the compositions of morphisms $E\to F\to F_{(d)}$ form a bounded set, hence the set  $\mathfrak{N}_{d} \coloneqq \{[N_d(F)] \mid [F] \in \gF \} $ is also dominated. Furthermore, as the degree functions are additive in short exact sequences, by using assumption $(b_s)$ we deduce that $\deg_{\alpha_j}$ is upper bounded on $\mathfrak{N}_{d}$ for $j \geq s-1$. Therefore, by the induction hypothesis, we obtain that $(\mathfrak{N}_{d})_{(s)}$ is bounded. As the families $(\mathfrak{N}_{d})_{(s)}$ and $\gF_{(d)}$ are bounded, and every $F_{(s)}$ fits in an extension of the form
\[
    0 \to (N_d(F))_{(s)} \to F_{(s)} \to F_{(d)} \to 0,
\]
we conclude by \cite[Proposition 1.2 (ii)]{Grothendieck-theHilbertScheme} that $\gF_{(s)}$ is bounded.

For the second assertion, we will use the additivity of the degree functions in short exact sequences such as
\[
    0 \to N_s(F) \to F \to F_{(s)} \to 0,
\]
with $[F] \in \gF$. As before, $\mathfrak{N}_{s} \coloneqq \{[N_s(F)] \mid [F] \in \gF \} $ is dominated. Also, by $(b_s)$ and the fact that $\gF_{(s)}$ is bounded, we deduce that $\deg_{\alpha_{s-1}}$ is upper bounded on $\mathfrak{N}_{s}$. Thus it follows by Lemma \ref{lem:boundednessI} that $\deg_{\beta}$ is lower bounded on $\mathfrak{N}_{s}$ for any $\beta \in \Amp^{s-2}(X)$. From the additivity of $\deg_{\beta}$ in short exact sequences, we conclude that $\deg_{\beta}$ is lower bounded on $\gF$.
\end{proof}

\begin{Cor}\label{cor:Grothendieck}
Let $\alpha_j \in \Amp^j(X)$ for $0 \leq j \leq n$ and let $\gF$ be a set of isomorphism classes of coherent sheaves on $X$ satisfying the following conditions:\\
(a) $\gF$ is dominated,\\
 $(b)$ the degree functions $\deg_{\alpha_j}$ are upper bounded on $\gF$ for all $j$.
 
 Then $\gF$ is bounded.
\end{Cor}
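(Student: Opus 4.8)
The plan is to derive the corollary as the special case $s=0$ of Theorem~\ref{th:boundednessCriterion}, so that essentially all of the work is already done. The key observation I would make first is that for every coherent sheaf $F$ one has $N_0(F)=0$. Indeed, by definition $N_0(F)$ is the largest coherent subsheaf of $F$ of dimension strictly less than $0$, and since every nonzero coherent sheaf has dimension at least $0$, there is no nonzero subsheaf of negative dimension. Consequently $F_{(0)}=F/N_0(F)=F$, and therefore $\gF_{(0)}=\{[F_{(0)}]\mid [F]\in\gF\}=\gF$.

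With this identification in hand, I would simply invoke Theorem~\ref{th:boundednessCriterion} with the integer parameter $s=0$. Hypothesis (a) of the theorem is literally hypothesis (a) of the corollary, and hypothesis $(b_0)$ asks that the degree functions $\deg_{\alpha_j}$ be upper bounded on $\gF$ for $j\ge s-1=-1$; since the degree functions in question are indexed by $0\le j\le n$, this is exactly hypothesis (b) of the corollary. The theorem then yields that $\gF_{(0)}$ is bounded, and by the first paragraph $\gF_{(0)}=\gF$, so $\gF$ is bounded, as claimed.

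I expect no genuine obstacle here, the substantive content having already been carried by Theorem~\ref{th:boundednessCriterion} and, through it, by Lemma~\ref{lem:boundednessI} and Lemma~\ref{lem:Chow}. The only points worth checking are that the statement of the theorem permits $s$ to be an arbitrary integer (it does, as $s$ is only assumed to be an integer there, and in particular $s=0$ is allowed even though the notation $N_s$ was introduced for positive $s$) and that the index bookkeeping matches, namely that the bound $j\ge -1$ in $(b_0)$ collapses to the full range $0\le j\le n$.
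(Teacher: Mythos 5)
Your proof is correct and coincides with the paper's (implicit) argument: the corollary is stated without proof precisely because it is the specialization $s=0$ of Theorem~\ref{th:boundednessCriterion}, where $(b_0)$ becomes hypothesis $(b)$ and $N_0(F)=0$ gives $\gF_{(0)}=\gF$. Your care in checking that $s=0$ is permitted (the theorem assumes only that $s$ is an integer, even though $N_s$ was introduced for positive $s$) is exactly the right bookkeeping.
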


A relative version of the above works by Remark \ref{rem:boundedness}.

Let $r, d$ be integers with $0 \le r < d \le \dim(X)$ and let $\alpha = (\alpha_d,\ldots,\alpha_r)$ be a degree system of ample classes $\alpha_i \in \Amp^i(X)$.

As in the classical case relative Harder-Narasimhan filtrations  exist for the notion of $\alpha$-semistability too. We recall the definition and the existence statement from \cite[Section 2.3]{HL}. The proof goes mutatis mutandis as in loc. cit., cf. \cite[Section 6.2]{TomaLimitareaII} for a proof in the analytic set-up. Note that our boundedness result, Theorem \ref{th:boundednessCriterion}, is needed in this argument.

\begin{Def}\label{def:relativeHN}
    Let $S$ be an integral scheme of finite type over $k$ and $E$ be a flat family of $d$-dimensional coherent sheaves on $X$ parameterized by $S$. Then a {\em relative Harder-Narasimhan filtration of $E$ for the $\alpha$-semistability} consists in a projective birational morphism $T\to S$ of integral schemes over $k$ and a filtration 
    $$0=HN_0(E)\subset  HN_1(E)\subset...\subset HN_l(E)=E_T$$
on $X\times T$ such that the factors $HN_i(E)/HN_{i-1}(E)$ are flat over $T$ for $1\le i\le l$ and which induces the absolute Harder-Narasimhan filtrations fibrewise over some dense Zariski open subset of $S$.
\end{Def}

\begin{Th}\label{thm:HN}
Let $S$, $E$ be as in Definition \ref{def:relativeHN}
 and assume morever that the general members of $E$ are pure. Then there exists a  relative Harder-Narasimhan filtration $(T\to S, HN_\bullet(E))$ of $E$ for the $\alpha$-semistability. Moreover this filtration has the following universal property:  if $f:T'\to S$ is a dominant morphism of integral $k$-schemes and if $F_\bullet$ is a filtration of $E_{T'}$ with flat factors, which coincides  fibrewise with the absolute Harder-Narasimhan filtration over general points $s\in S$, then $f$ factorizes over $T$ and $F_\bullet=HN_\bullet(E)_{T'}$.
\end{Th}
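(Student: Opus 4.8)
The plan is to adapt the classical construction of Huybrechts--Lehn \cite[Section 2.3]{HL} almost verbatim, the only new ingredient being that the Grothendieck-type boundedness of destabilizing subsheaves is supplied by Corollary \ref{cor:Grothendieck} in place of the usual one. The core of the argument is the construction of a \emph{relative maximal destabilizing subsheaf}, the full filtration being then obtained by Noetherian induction on the length of the generic Harder--Narasimhan filtration. Since the general members of $E$ are pure, the fibre $E_\eta$ over the generic point $\eta$ of $S$ is pure of dimension $d$ and hence, by Proposition \ref{prop:HNproperty}, carries a unique HN filtration $0 = B_0 \subset B_1 \subset \cdots \subset B_l = E_\eta$ with $\alpha$-semistable factors of strictly decreasing reduced $\alpha$-Hilbert polynomial. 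By its uniqueness this filtration is already defined over the residue field $K(S)$, being invariant under all automorphisms, and by generic flatness together with the openness of $\alpha$-semistability (Proposition \ref{prop:openness}) it spreads out over a dense open $U \subseteq S$ to a filtration of $E_U$ whose factors are flat over $U$ and fibrewise coincide with the HN filtration.

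To extend this over all of $S$, up to a projective birational modification, I would use the relative Quot scheme. Let $P_i \coloneqq P_h(E_\eta / B_i)$ be the \emph{ordinary} Hilbert polynomial of the generic quotient. Over $U$ the quotient $E_U \twoheadrightarrow E_U/HN_i$ defines a section $U \to \Quot(E/S, P_i)$, and since $\Quot(E/S,P_i) \to S$ is projective, the scheme-theoretic closure of its image yields a projective birational morphism $T_i \to S$ extending the section. Taking $T$ to be the normalisation of a component dominating $S$ of the fibre product of the $T_i$ over $S$, one obtains a single projective birational $T \to S$ together with subsheaves $HN_i(E) \subset E_T$ restricting to the generic HN filtration. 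Since points of $\Quot(E/S,P_i)$ parametrise quotients with fixed ordinary Hilbert polynomial that are flat over the base, the quotients $E_T/HN_i(E)$, and therefore all the factors $HN_i(E)/HN_{i-1}(E)$, are automatically flat over $T$. This produces the required relative HN filtration; carrying out the construction inductively on $E_T/HN_1(E)$ guarantees the nesting of the filtration.

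The place where our boundedness result enters, and the main obstacle, is in making this machinery of finite type. One must know that the HN factors arising fibrewise, equivalently the maximal destabilizing subsheaves $HN_1(E_t) \subset E_t$ as $t$ ranges over $S$, form a bounded family, so that only finitely many ordinary Hilbert polynomials $P_i$ occur and the Quot schemes above are the relevant ones. Here I would argue as usual by passing to quotients: the family $\{ E_t/HN_1(E_t) \}$ is dominated by the sheaf dominating $\{E_t\}$, and the HN inequalities bound its $\alpha$-degrees from above, the leading $\alpha_d$-degree being bounded by $\deg_{\alpha_d}(E_t)$ and the lower ones being controlled through the reduced $\alpha$-Hilbert polynomials along the filtration. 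Corollary \ref{cor:Grothendieck} then shows this quotient family is bounded, whence the family of kernels $\{HN_1(E_t)\}$ is bounded as well. Carefully extracting the $\alpha$-degree bounds needed to invoke Corollary \ref{cor:Grothendieck} is the technical heart of the adaptation.

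Finally, the universal property follows from the uniqueness of the absolute HN filtration (Proposition \ref{prop:HNproperty}) and the separatedness of the relative Quot scheme. Given a dominant morphism $f : T' \to S$ of integral schemes and a filtration $F_\bullet$ of $E_{T'}$ with flat factors agreeing fibrewise with the absolute HN filtration over general points of $S$, the quotients $E_{T'} \twoheadrightarrow E_{T'}/F_i$ define an $S$-morphism $\psi$ from $T'$ to the fibre product of the $\Quot(E/S,P_i)$. By uniqueness of the HN filtration, $\psi$ sends the generic point of $T'$ to the generic point of $T$; since $T$ is closed in that fibre product and $T'$ is integral, it follows that $\psi(T') \subseteq T$. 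This yields the factorization $f : T' \to T \to S$, and pulling back the universal quotients identifies $F_\bullet$ with $HN_\bullet(E)_{T'}$.
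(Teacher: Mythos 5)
Your overall architecture is exactly the one the paper intends: the paper's ``proof'' of Theorem \ref{thm:HN} is a citation of Huybrechts--Lehn, Section 2.3 (cf.\ also the analytic version in \cite{TomaLimitareaII}), adapted mutatis mutandis, and your sketch --- generic HN filtration via Proposition \ref{prop:HNproperty}, spreading out over a dense open using generic flatness and Proposition \ref{prop:openness}, extension over a projective birational $T \to S$ by taking closures of sections of relative Quot schemes indexed by the \emph{ordinary} Hilbert polynomials $P_h(E_\eta/B_i)$ (the correct move, since the real-valued $\alpha$-polynomials cannot index Quot schemes), and the universal property via uniqueness of the absolute HN filtration together with separatedness of Quot --- reproduces that argument faithfully. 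One small caution: in positive characteristic, ``invariant under all automorphisms, hence defined over $K(S)$'' only handles the separable part of a field extension; it is cleaner to note that Proposition \ref{prop:HNproperty} (Rudakov's theorem) applies directly to $\Coh_d(X_{K(S)})$, so no descent is needed at all.

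There is, however, a genuine gap at the step you yourself call the technical heart. You propose to verify the hypotheses of Corollary \ref{cor:Grothendieck} for the quotient family $\{E_t/HN_1(E_t)\}$, claiming the HN inequalities bound \emph{all} the degrees $\deg_{\alpha_j}$ from above, ``the lower ones being controlled through the reduced $\alpha$-Hilbert polynomials along the filtration.'' This cannot work: the inequality $p_\alpha(F) \ge p_\alpha(E)$ holds only for $m \gg 0$, i.e.\ it constrains the coefficient vector \emph{lexicographically}. It forces $\mu^{\alpha_{d-1}}_{\alpha_d}(F) \ge \mu^{\alpha_{d-1}}_{\alpha_d}(E)$, but as soon as that inequality is strict --- even by an arbitrarily small amount --- the coefficients $\deg_{\alpha_j}(F)$ for $j \le d-2$ are completely unconstrained, so hypothesis $(b)$ of Corollary \ref{cor:Grothendieck} (upper bounds for \emph{all} $j$) cannot be established a priori. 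The repair is the one the paper explicitly points to: use Theorem \ref{th:boundednessCriterion} (equivalently the second assertion of Lemma \ref{lem:boundednessI}, with $s = d$) rather than the Corollary. Since $HN_1(E_t)$ is saturated, the quotient $G_t := E_t/HN_1(E_t)$ is \emph{pure} of dimension $d$; the family $\{G_t\}$ is dominated, $0 < \deg_{\alpha_d}(G_t) < \deg_{\alpha_d}(\gamma)$, and $\deg_{\alpha_{d-1}}(G_t)$ is bounded above by the slope inequality alone. Lemma \ref{lem:boundednessI} then gives boundedness of $\{G_t\} = \{(G_t)_{(d)}\}$ using only these top two degree bounds, purity doing the work that the missing lower-degree bounds cannot; the kernels $\{HN_1(E_t)\}$ are then bounded as kernels of surjections between members of bounded families, and only finitely many ordinary Hilbert polynomials occur, as your construction requires.
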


%%%%%%%%%%%%%%%%%%%%%%%%%

\section{The moduli stack of semistable sheaves}\label{sec:stack}

In this section we see that the notion of $\alpha$-semistability behaves well in flat families of sheaves by studying the properties of the corresponding moduli substack $\cC oh_X^{\alpha ss} \subset \cC oh_X$ of $\alpha$-semistable sheaves on $X$, where $\alpha=(\alpha_d,\ldots,\alpha_r)$ is a fixed class in $\Amp^d(X)\times\ldots\times\Amp(X)^r$ with $n\ge d>r\ge0$.

\subsection{Openness of semistability}

The following shows that $\cC oh_X^{\alpha ss}$ is an open substack in $\cC oh_X$.

\begin{prop}\label{prop:openness}
The property of being $\alpha$-semistable (geometrically $\alpha$-stable) is open in flat families.
\end{prop}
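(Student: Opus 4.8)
The plan is to follow the classical strategy of Huybrechts--Lehn \cite[Prop.~2.3.1]{HL}: to prove openness I will show that the complementary locus
\[
S_{\mathrm{nss}}=\{\,s\in S \mid E_s \text{ is not }\alpha\text{-semistable}\,\}
\]
is closed, and argue analogously for geometric stability. Since $E$ is flat over the finite-type base $S$, the fibres $\{E_s\}$ form a bounded family; in particular it is dominated in the sense of Section~\ref{sec:boundedness}, and $\ch(E_s)$ — hence every $\deg_{\alpha_j}(E_s)$ and the polynomial $P_\alpha(E_s)$ — is constant on connected components of $S$. By Definition~\ref{def:alphaSs}, $E_s$ fails to be $\alpha$-semistable exactly when it is impure or when it carries a subsheaf $F$ with $0<\deg_{\alpha_d}(F)<\deg_{\alpha_d}(E_s)$ and $p_\alpha(F)\succ p_\alpha(E_s)$; I will show each of the two resulting loci is closed, and since closedness may be checked on the finitely many irreducible components I may assume $S$ integral.

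For purity, the locus where $N_d(E_s)\ne 0$ is the image under $X\times S\to S$ of the dimension $<d$ subsheaves occurring as torsion of the fibres. These torsion subsheaves form a bounded family by Theorem~\ref{th:boundednessCriterion}, so only finitely many Hilbert polynomials occur among the pure quotients $E_s\twoheadrightarrow (E_s)_{pure}$; the impure locus is then the finite union of the images of the corresponding relative Quot schemes, each projective over $S$, hence closed. Consequently the pure locus is open and I may assume all fibres pure.

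The heart of the matter is the family of maximal destabilizing subsheaves $HN_1(E_s)$ (Proposition~\ref{prop:HNproperty}), and I claim it is bounded. Theorem~\ref{thm:HN} furnishes, over a birational $T\to S$, a relative Harder--Narasimhan filtration whose first step is a flat family over $T$ agreeing fibrewise with $HN_1(E_u)$ on a dense open $U\subseteq S$, so $\{HN_1(E_u)\mid u\in U\}$ is bounded; applying this to $S\setminus U$ and invoking Noetherian induction covers $S$ by finitely many strata over each of which the maximal destabilizers are bounded, and their union is bounded. A bounded family carries only finitely many numerical classes, so the polynomials $P_\alpha(HN_1(E_s))$ range over a finite set $\{Q_1,\dots,Q_t\}$ — this finiteness is what replaces the integrality of Hilbert polynomials in the classical argument. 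For each index $i$ with $p_\alpha(Q_i)\succ p_\alpha(E_s)$ and $0<\deg_{\alpha_d}(Q_i)<\deg_{\alpha_d}(E_s)$, the relative Quot scheme parameterizing subsheaves of class $Q_i$ is projective over $S$, and since $P_\alpha(E_s)$ and $Q_i$ are fixed on components the destabilizing comparison is automatic there, so its whole image lies in $S_{\mathrm{nss}}$. Every (pure) unstable $E_s$ is detected by $HN_1(E_s)$ on one of these finitely many Quot schemes, whence $S_{\mathrm{nss}}$ is the finite union of their closed images, and openness of $\alpha$-semistability follows.

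For geometric $\alpha$-stability I would run the identical scheme, replacing the strict comparison by $p_\alpha(F)\succeq p_\alpha(E_s)$ together with $0<\deg_{\alpha_d}(F)<\deg_{\alpha_d}(E_s)$, so that the finite union of Quot-images now records the subsheaves violating stability; passing to geometric points is harmless because Quot schemes and the boundedness statements of Section~\ref{sec:boundedness} are stable under base field extension. I expect the main obstacle to be precisely the uniform boundedness of the maximal destabilizing subsheaves across the whole base, i.e. that $\{HN_1(E_s)\}$ is bounded: this is where Theorem~\ref{thm:HN} and the boundedness criterion Theorem~\ref{th:boundednessCriterion} must do the essential work, and also the point where the real-valuedness of the $\alpha$-Hilbert polynomials forces one to deduce finiteness of the numerical types $\{Q_i\}$ through boundedness rather than by counting.
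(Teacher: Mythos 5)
Your semistability half is essentially correct, but it takes a genuinely different --- and considerably heavier --- route than the paper. The paper's proof works with destabilizing \emph{quotients} rather than subsheaves: after reducing to pure fibres of fixed numerical class $\gamma$, it considers the set $H$ of numerical classes of pure $d$-dimensional quotients $F'$ of geometric fibres with $p_\alpha(F')<p_\alpha(\cF_s)$. Such quotients are dominated by the family, have $\deg_{\alpha_d}(F')\le\deg_{\alpha_d}(\gamma)$, and the destabilizing inequality forces $\mu^{\alpha_{d-1}}_{\alpha_d}(F')\le\mu^{\alpha_{d-1}}_{\alpha_d}(\gamma)$, so Lemma~\ref{lem:boundednessI} bounds them directly and $H$ is finite; a single relative Quot scheme $\Quot(\cF,H)$, proper over $S$, then has closed image equal to the non-semistable locus. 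Your detour through Theorem~\ref{thm:HN} and Noetherian induction to bound $\{HN_1(E_s)\}$ is logically admissible (Theorem~\ref{thm:HN} precedes this proposition and does not rely on openness), and your observation that finiteness of the numerical classes substitutes for the integrality of classical Hilbert polynomials is exactly the right point --- it is the same device the paper uses --- but the relative HN machinery buys you nothing that the direct application of Lemma~\ref{lem:boundednessI} to quotients does not give more cheaply. Your purity argument is fine; it is the standard one that the paper merely invokes as known.

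The stability half, however, has a genuine gap. Maximal destabilizing subsheaves cannot detect failure of stability: if $E_s$ is strictly semistable then $HN_1(E_s)=E_s$, whereas the subsheaves violating stability satisfy $p_\alpha(F)=p_\alpha(E_s)$ with $0<\deg_{\alpha_d}(F)<\deg_{\alpha_d}(E_s)$, and nothing in your relative-HN mechanism bounds the family of such $F$ across the base. So ``running the identical scheme'' does not produce the finite set of numerical classes needed to set up the Quot schemes, and the argument as written only re-proves openness of semistability. The repair is to switch to quotients, exactly as in the paper's proof: saturating $F$ only strengthens the violation, and the associated pure quotients $E_s\twoheadrightarrow E_s/\bar F$ satisfy $p_\alpha(E_s/\bar F)\preceq p_\alpha(E_s)$, are dominated, and have $\deg_{\alpha_d}$ and slope bounded above, so Lemma~\ref{lem:boundednessI} bounds them and yields finitely many classes; the proper-image argument then closes the non-(geometrically-)stable locus. (Both you and the paper gloss the passage between points, closed points and geometric fibres in the same harmless way, so I do not count that against you.)
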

\begin{proof}
We only treat the case of $\alpha$-semistability, as the other one is similar. Let $S$ be a scheme of finite type over $k$, and let $\cF$ be an $S$-flat family of sheaves on $X$.  Since the property of being pure is open, we may assume that all fibers $\cF_s$ over $S$ are pure with fixed numerical class $\gamma \in K(X)_\text{num}$. Consider the set
\begin{align*}
     H = \{ [F'] \in K(X)_\text{num} :{}& \cF_s \to F' \text{ is a pure quotient of dimension $d$ for a} \\
      {}& \text{geometric point $s \in S$ such that $p_\alpha(F') < p_\alpha(\cF_s)$}\}.
\end{align*}

By Lemma \ref{lem:boundednessI} the set $H$ is finite. Consider the relative Quot scheme $\varphi : Q :=\Quot(\cF,H) \to S$ of quotients $[\cF_s \to F']$ with $[F'] \in H$. Since $Q$ is proper over $S$, its scheme-theoretic image $\varphi(Q)$ is a closed subset of $S$. We see that a fiber $\cF_s$ is $\alpha$-semistable if and only if $s$ is contained in the open set $S \setminus \varphi(Q)$.
\end{proof}

\subsection{Universal closedness}

We now state a "valuative criterion of properness" for $\alpha$-semistability following Langton, \cite{Langton}. Langton proves his result in the case of the classical slope-stability, i.e. for $d=n$, $r=d-1$ and $\alpha=(h^n,h^{n-1})$. A generalization to the case $\alpha=(h^d,...,h^r)$ is given by Huybrechts and Lehn in \cite[Theorem 2.B.1]{HL}. Applying their proof in our situation with $\alpha\in \Amp^d(X)\times\ldots\times\Amp(X)^r$ arbitrary encounters a difficulty related to the fact that in this case $\alpha$-degrees of coherent sheaves no longer range in a discrete set, since $\alpha$ is not necessarily rational. This difficulty has been circumvented in the proof of    \cite[Theorem 3.1]{TomaCriteria} and that proof applies to our present situation after performing the obvious changes. We refer the reader to it. 

\begin{Th}\label{thm:Langton}
  Let $R$ be a discrete valuation ring with residue field $k$ and quotient field $K$ and let $F$ be a coherent sheaf on $X_R$ giving a flat family of $d$-dimensional coherent sheaves on $X$ parameterized by $\Spec(R)$. If $F_K$ is $\alpha$-semistable on $X_K$ then there exists a coherent subsheaf $E$ of $F$ which coincides with $F$ over $\Spec(K)$ and whose fibre over $\Spec(k)$ is   $\alpha$-semistable.
\end{Th}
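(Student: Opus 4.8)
The plan is to adapt Langton's method of elementary modifications \cite{Langton} along the closed fibre, following the pattern of \cite[Theorem 2.B.1]{HL} and \cite[Theorem 3.1]{TomaCriteria}; the only genuine difficulty is that, since $\alpha$ need not be rational, the $\alpha$-degrees do not range in a discrete subset of $\R$, so the classical termination argument must be replaced by a boundedness input. If $F_k$ is already $\alpha$-semistable we are done, so assume it is not. After an initial modification I would arrange that the special fibre is pure (the torsion $N_d(F_k)$ only enters the lower-order degrees and is cleared along the way), and then, using the Harder--Narasimhan property of Proposition \ref{prop:HNproperty} applied to $F_k$, let $B_0\subset F_k$ be the maximal $\alpha$-destabilizing subsheaf and $G_0=F_k/B_0$. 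Setting $F^{(1)}:=\ker\bigl(F\twoheadrightarrow G_0\bigr)$ gives a coherent subsheaf agreeing with $F$ over $\Spec(K)$ with $\pi F\subset F^{(1)}\subset F$ for a uniformiser $\pi$ of $R$. Iterating produces a decreasing chain $F=F^{(0)}\supset F^{(1)}\supset\cdots$, all equal to $F_K$ over $K$, with maximal destabilizers $B_n\subset F^{(n)}_k$ and quotients $G_n=F^{(n)}_k/B_n$.

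The standard snake-lemma computation for multiplication by $\pi$ yields the two short exact sequences on the special fibre
\[
 0\to B_n\to F^{(n)}_k\to G_n\to 0,\qquad 0\to G_n\to F^{(n+1)}_k\to B_n\to 0,
\]
so that $B_n$ and $G_n$ are interchanged in passing from $F^{(n)}_k$ to $F^{(n+1)}_k$. From these sequences together with the maximality of $B_n$ in $F^{(n)}_k$ one deduces the monotonicity $p_\alpha(B_{n+1})\le p_\alpha(B_n)$ (for $m\gg0$) of the reduced $\alpha$-Hilbert polynomials, and in particular that the slope $\mu^{\alpha_{d-1}}_{\alpha_d}(B_n)$ is non-increasing. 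This bookkeeping is routine.

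The crux of the argument, and where the earlier results of the paper are needed, is to show that the process terminates. In the classical rational case one notes that the $p_\alpha(B_n)$ lie in a discrete set and are bounded below by $\alpha$-semistability of $F_K$, hence stabilize; here discreteness fails and this is the \emph{main obstacle}. Instead I would invoke the boundedness criterion. From $\pi F^{(n)}\subset F^{(n+1)}$ one checks that all the $F^{(n)}_k$, and hence all the $B_n$, are dominated by a single fixed coherent sheaf. Moreover $\deg_{\alpha_d}(B_n)\le \deg_{\alpha_d}(F^{(n)}_k)=\deg_{\alpha_d}(F_K)$ is bounded above by flatness, while the monotonicity of the slopes bounds $\mu^{\alpha_{d-1}}_{\alpha_d}(B_n)$, and hence $\deg_{\alpha_{d-1}}(B_n)$, from above. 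Lemma \ref{lem:boundednessI} (or Theorem \ref{th:boundednessCriterion}, equivalently Corollary \ref{cor:Grothendieck}) then shows that $\{[B_n]\}_n$ is bounded. A bounded family realises only finitely many numerical classes, so the reduced polynomials $p_\alpha(B_n)$ take only finitely many values and the non-increasing sequence must be eventually constant. This is exactly how the non-discreteness is circumvented in \cite[Theorem 3.1]{TomaCriteria}.

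Once $p_\alpha(B_n)$ is eventually constant, the swap sequences force the natural maps between consecutive destabilizers to be isomorphisms for $n\gg0$, so the $B_n$ organise into a coherent subsheaf of $F$ that is flat over $R$ and whose generic fibre is a proper subsheaf of $F_K$ realising the (constant) destabilizing reduced polynomial — contradicting the $\alpha$-semistability of $F_K$. Hence the algorithm cannot run indefinitely: for some $n$ the fibre $F^{(n)}_k$ is $\alpha$-semistable, and $E:=F^{(n)}$ is the required subsheaf. I expect the termination step to be the only real difficulty; the elementary-modification bookkeeping, the swap sequences, and the final assembly of the limiting $R$-flat destabilizing subsheaf all follow the classical pattern, the single new ingredient being the boundedness of Theorem \ref{th:boundednessCriterion} in place of Langton's discreteness.
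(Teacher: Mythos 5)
Your overall skeleton is exactly the route the paper intends: elementary modifications along the closed fibre as in \cite{Langton} and \cite[Theorem 2.B.1]{HL}, the two swap sequences, monotonicity of $p_\alpha(B_n)$, and a termination argument in which Langton's discreteness is replaced by a boundedness statement, with the final flat limit of the stabilized destabilizers contradicting the $\alpha$-semistability of $F_K$. This matches the paper, which gives no independent argument but defers precisely this point to the proof of \cite[Theorem 3.1]{TomaCriteria}. Your endgame --- a bounded family realises only finitely many numerical classes, so the non-increasing sequence $p_\alpha(B_n)$ takes finitely many values and must stabilize --- is the right way to circumvent non-discreteness. The problem is the input you feed into it.

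The sentence ``From $\pi F^{(n)} \subset F^{(n+1)}$ one checks that all the $F^{(n)}_k$, and hence all the $B_n$, are dominated by a single fixed coherent sheaf'' is not a routine check; it is the entire difficulty, and as stated it does not go through. Domination is a quotient-side condition: it does not pass to subsheaves, so $B_n \subset F^{(n)}_k$ gives nothing, and while $B_n$ is indeed a quotient of $F^{(n+1)}_k$ by the swap sequence, the sheaves $F^{(n)}_k$ themselves are neither subsheaves nor quotients of anything fixed --- the map $F^{(n+1)}_k \to F^{(n)}_k$ induced by the inclusion has kernel $G_n$ and image $B_n$, so positivity is not inherited in either direction along the tower. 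From $\pi^n F \subset F^{(n)} \subset F$ one only exhibits $F^{(n)}_k$ as a quotient of a \emph{subsheaf} of $F \otimes_R R/\pi^{n+1}$, an iterated extension of $n+1$ copies of $F_k$ whose length grows with $n$, which yields no uniform domination. Worse, since all $F^{(n)}_k$ have the same classical Hilbert polynomial $P_h(F_K)$ by flatness, dominating this family by one sheaf is equivalent to a uniform regularity bound, i.e.\ to the very boundedness you are trying to prove: the step is circular. A further warning sign: your argument via Lemma \ref{lem:boundednessI} never uses any structural feature of the Langton tower beyond bounds on two degrees, and if boundedness of the semistable sheaves $B_n$ with slopes trapped in $\bigl(\mu^{\alpha_{d-1}}_{\alpha_d}(\gamma), \mu^{\alpha_{d-1}}_{\alpha_d}(B_0)\bigr]$ came this cheaply, the hypothesis $\BSS$ that the paper is careful to impose in Sections 4--6 would be nearly superfluous. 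What is true for free is only partial: since each $F^{(n)}$ is $R$-flat with generic fibre $F_K$, all special fibres have the same $d$-dimensional support cycle, so $\cycle_d(B_n)$ ranges over the finitely many subcycles of a fixed effective cycle and $\deg_{\alpha_d}(B_n)$ takes finitely many values; but this controls neither $\deg_{\alpha_{d-1}}(B_n)$ nor the class of $B_n$. Supplying the missing control is exactly the content of the argument in \cite{TomaCriteria} to which the paper refers, and until that step is genuinely proved rather than asserted, your proof has a gap at the one place where it departs from the classical case.
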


As a consequence of the above, one deduces that the moduli stack $\cC oh_X^{\alpha ss}$ satisfies the existence part of the valuative criterion of properness. However, as in the case of Gieseker semistability, this stack is not separated.

\subsection{Boundedness of semistability}\label{subsection:BSS}

The \textit{boundedness of semistability hypothesis}, ($\BSS$), with respect to a numerical class $\gamma \in K(X)_\text{num}$ of dimension $d > 0$ and a relatively compact subset $K \subset  \Amp^d(X) \times \Amp^{d-1}(X)$ is the following:
\vspace{0.5em}

$\BSS(K,\gamma)$:  The set of isomorphism classes of coherent sheaves $E$ of numerical class $\gamma$ on $X$ such that $E$ is slope-semistable with respect to some $(\alpha_d,\alpha_{d-1}) \in K$ is bounded. 
\vspace{0.5em}

The case $K = \{(\alpha_d,\alpha_{d-1})\}$ is particularly important to ensure the existence of \textit{finite-type} moduli stacks of pure sheaves, which will further provide one of the key ingredients in the construction of good moduli spaces of sheaves (in the sense of Alper \cite{Alper13}). It took the efforts of many mathematicians to solve this case in the context of classical slope-semistability (e.g.~\cite{atiyah57vector,takemoto1972stable,Gieseker,Maruyama81boundedness, simpson1994moduli, Langer}). 

For studying the variation of $\alpha$-semistability, it is crucial to allow $(\alpha_d,\alpha_{d-1})$ to vary in some compact $K$ and we have to ensure the boundedness statement $\BSS(K,\gamma)$ holds true, cf. Section \ref{sec:chambers}. In the torsion-free case, several such results were obtained by Matsuki and Wentworth \cite{MatsukiWentworth} over smooth surfaces and by Greb, Ross and Toma (see \cite[Section 6]{GrebRossToma-variation}) over higher-dimensional base spaces.

The boundedness problem was solved in full generality for one-dimensional pure sheaves (see \cite[Proposition 3.6]{greb2019semicontinuity} and \cite[Proposition 7.19]{joyce2021enumerative}). See \cite{PavelRossToma}
for the case of higher-dimensional pure sheaves.

%%%%%%%%%%%%%%%%%%%%%%%%%

\section{Existence of good moduli spaces of semistable sheaves}\label{sec:moduli}

In this section we apply \cite[Theorem 7.27]{AlperHLH} to show the existence of proper good moduli spaces for semistable sheaves on $X$ with respect to  complete systems of degree functions.  

Let $\gamma \in K(X)_\text{num}$ be a Grothendieck numerical class of dimension $d > 0$ which is represented by a coherent sheaf $F_0$.  We choose a degree system $\alpha = (\alpha_d,\ldots,\alpha_0)$ given by ample classes such that the boundedness statement $\BSS(\{(\alpha_d,\alpha_{d-1})\},\gamma)$ is satisfied.

Let $V$ be the real vector space of all polynomials in $\R[T]$ of degree at most $d$, endowed with the following total order relation: $P \leq Q$ if $P(t) \leq Q(t)$ for $t \gg 0$.  Consider the group morphism $p_\gamma : K(X) \to V$ defined by
$$ p_\gamma = \deg_{\alpha_d}(F_0)P_\alpha - P_\alpha(F_0)\deg_{\alpha_d},$$
where $P_\alpha$ is the $\alpha$-Hilbert polynomial defined in Section \ref{sect:Preliminaries}.  This defines a semistability notion as in \cite[Section 7.3]{AlperHLH} on the stack $\cC oh_X$ of flat families of coherent sheaves in $X$.  By definition a coherent sheaf $E$ on $X$ of class $\gamma$ is $p_\gamma$-semistable if for any subsheaf $F \subset E$ we have $p_\gamma(F) \leq 0$.  One checks that semistable sheaves in this sense are automatically pure and then easily sees that this notion of semistability coincides with that of $\alpha$-semistability.

The substack $\cC oh_X^{\gamma, \alpha ss}$ of $p_\gamma$-semistable points in $\cC oh_X$ is open by Proposition \ref{prop:openness} and quasi-compact by our assumption $\BSS(\{(\alpha_d,\alpha_{d-1})\},\gamma)$ .  We can now state the main result of this section:

\begin{thm}\label{thm:moduli}
In characteristic zero and under the boundedness assumption $\BSS(\{(\alpha_d,\alpha_{d-1})\},\gamma)$, the moduli stack  $\cC oh_X^{\gamma, \alpha ss}$ admits a proper good moduli space $M_X^{\gamma, \alpha ss}$.
\end{thm}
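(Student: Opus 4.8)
The plan is to verify the hypotheses of the existence criterion of Alper, Halpern-Leistner and Heinloth by applying \cite[Theorem 7.27]{AlperHLH} to the open substack $\cC oh_X^{\gamma,\alpha ss}\subset\cC oh_X$. Recall that the underlying machinery produces a separated good moduli space once the stack is shown to be of finite type over $k$ with affine diagonal and to satisfy $\Theta$-reductivity together with S-completeness; properness of the good moduli space then follows from the existence part of the valuative criterion of properness for the stack. The hypothesis $\car k = 0$ is what allows us to invoke their theory, as it rests on the theory of good moduli spaces of \cite{Alper13} for stacks with reductive stabilizers. I will check the required conditions in turn, drawing on the results established above.

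First, algebraicity and finiteness. The stack $\cC oh_X$ of coherent sheaves on $X$ is algebraic, locally of finite type over $k$, with affine diagonal, the latter because the isomorphism schemes between flat families of sheaves are affine over the base. By Proposition \ref{prop:openness} the substack $\cC oh_X^{\gamma,\alpha ss}$ is open, and by the boundedness hypothesis $\BSS(\{(\alpha_d,\alpha_{d-1})\},\gamma)$ it is quasi-compact, since boundedness of the set of semistable sheaves of class $\gamma$ translates into quasi-compactness of the corresponding locus. Hence $\cC oh_X^{\gamma,\alpha ss}$ is of finite type over $k$ with affine diagonal, as required.

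Next, $\Theta$-reductivity and S-completeness. In the language of \cite{AlperHLH}, $\Theta$-reductivity amounts to showing that for every discrete valuation ring $R$ over $k$ a weighted filtration of the generic fibre extends over $\Spec R$; concretely it is the properness of the stack of filtrations of families of $\alpha$-semistable sheaves of class $\gamma$. This is precisely the content of the existence and universal property of relative Harder-Narasimhan filtrations: by the Harder-Narasimhan property (Proposition \ref{prop:HNproperty}) and its relative form (Theorem \ref{thm:HN}), together with the boundedness furnished by $\BSS$, such limits of filtrations exist and are unique. S-completeness is the analogue, over the stack $\overline{ST}_R$ of \cite{AlperHLH}, of the valuative criterion of separatedness up to S-equivalence; it is supplied by the Langton-type semistable reduction of Theorem \ref{thm:Langton}, which guarantees that a semistable sheaf over the fraction field degenerates to a semistable sheaf over $R$, combined with the artinian and noetherian structure of the category of semistable sheaves with fixed reduced $\alpha$-Hilbert polynomial (Proposition \ref{prop:abelianCategory}), which forces the two possible extensions to agree up to S-equivalence.

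Finally, properness. Separatedness of the good moduli space is a formal consequence of S-completeness. The existence part of the valuative criterion of properness for $\cC oh_X^{\gamma,\alpha ss}$ is again exactly the assertion of Theorem \ref{thm:Langton}: any map $\Spec K\to \cC oh_X^{\gamma,\alpha ss}$ from the fraction field of a DVR extends, after a finite extension of $R$ if necessary, to a map $\Spec R\to\cC oh_X^{\gamma,\alpha ss}$. Since the good moduli space inherits finite type and is now both separated and universally closed, it is proper over $k$. The main obstacle in carrying this out rigorously is the precise translation of these geometric inputs into the abstract conditions of \cite{AlperHLH}: because $\alpha$ need not be rational, the $\alpha$-degrees do not lie in a discrete subgroup, so one must check --- as was already necessary for Theorem \ref{thm:Langton} --- that the finiteness built into the boundedness hypothesis and into the noetherian and artinian property of Proposition \ref{prop:abelianCategory} still suffices to establish $\Theta$-reductivity and S-completeness despite the non-discreteness of the numerical invariants.
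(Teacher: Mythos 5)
Your overall skeleton coincides with the paper's: apply \cite[Theorem 7.27]{AlperHLH} to $\cC oh_X^{\gamma, \alpha ss}$, which is open in $\cC oh_X$ by Proposition \ref{prop:openness} and quasi-compact by $\BSS(\{(\alpha_d,\alpha_{d-1})\},\gamma)$, then deduce properness of the separated good moduli space from Theorem \ref{thm:Langton}. However, your middle step both omits the one substantive verification the paper actually performs and misattributes the conditions of \cite{AlperHLH}. Theorem 7.27 of \cite{AlperHLH} is stated for semistability defined by a polynomial numerical invariant on the stack of objects of an abelian category, and for such stacks $\Theta$-reductivity and S-completeness are established inside \cite{AlperHLH} itself; what the user must supply is that $\alpha$-semistability is of this form. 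The paper does this by introducing the group morphism $p_\gamma = \deg_{\alpha_d}(F_0)P_\alpha - P_\alpha(F_0)\deg_{\alpha_d}$ with values in the totally ordered vector space $V$ of real polynomials of degree at most $d$, and by checking that $p_\gamma$-semistable sheaves are automatically pure and that $p_\gamma$-semistability coincides with $\alpha$-semistability. This translation is missing from your proposal, and it is precisely where the non-rationality of $\alpha$ (the non-discreteness of the $\alpha$-degrees you worry about at the end) is absorbed, since the formalism of \cite{AlperHLH} accepts real-valued invariants.

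Your proposed verifications of $\Theta$-reductivity and S-completeness are moreover wrong in mechanism and would fail if carried out. $\Theta$-reductivity asks that an arbitrary weighted filtration of the generic fibre of a family over a discrete valuation ring extend across the closed point; for stacks of sheaves this comes from elementary saturation of subsheaves (properness of Quot-type schemes), and for the semistable locus the relevant filtrations are by subsheaves with the \emph{same} reduced $\alpha$-Hilbert polynomial, where Proposition \ref{prop:abelianCategory} enters. The relative Harder--Narasimhan filtration of Theorem \ref{thm:HN} is a different statement: it produces, after a projective birational modification of an integral base, the fibrewise HN filtration over a dense open subset; it neither extends a \emph{given} filtration over a DVR nor concerns the non-destabilizing filtrations relevant here, so it cannot yield $\Theta$-reductivity. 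Symmetrically, S-completeness is a separatedness-type condition, proved for moduli of objects in abelian categories via elementary modifications of families over $\overline{ST}_R$; Langton's theorem is an existence (universal-closedness) statement and does not give it. In the paper, Theorem \ref{thm:Langton} enters exactly once, at the end, to verify the existence part of the valuative criterion of properness for $M_X^{\gamma, \alpha ss}$ --- the one place where you also invoke it correctly (and no extension of $R$ is needed there, as the theorem is stated for $R$ itself).
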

\begin{proof}
By \cite[Theorem 7.27]{AlperHLH} and the previous discussion, the stack $\cC oh_X^{\gamma, \alpha ss}$ admits a separated good moduli space which we call $M_X^{\gamma, \alpha ss}$.  Due to { Theorem \ref{thm:Langton} }, $M_X^{\gamma, \alpha ss}$ satisfies the existence part of the valuative criterion of properness, therefore it is proper. 
\end{proof}

%%%%%%%%%%%%%%%%%%%%%%%%%

\section{Chamber structure}\label{sec:chambers}

Let $\gamma \in K(X)_\text{num}$ be a Grothendieck numerical class of dimension $d \geq 1$ and let $r$ be a non-negative integer less than $d$. 
We will consider degree systems $ \alpha:=(\alpha_d,\ldots,\alpha_r)\in\Amp^d(X)\times\ldots\times\Amp^r(X)$ given by ample classes and we aim at introducing a locally finite  chamber structure on $\Amp^d(X)\times\ldots\times\Amp^r(X)$ describing the variation of $\alpha$-semistability of coherent sheaves with numerical class $\gamma$ with respect to $(\alpha_{d},\ldots,\alpha_r)$. 
In order to deal with the locally finite aspect of our desired chamber structure, we will restrict our attention to a compact subset $K$ of $\Amp^d(X)\times\ldots\times\Amp^r(X)$ and will show that under a boundedness of semistability assumption as in  Section \ref{subsection:BSS} the variation of $\alpha$-semistability on $K$ gives rise to a chamber structure involving only a finite number of walls. 
We give a formal definition. 
\begin{Def}\label{def:chambers}
For a fixed numerical class $\gamma \in K(X)_\text{num}$  of dimension $d \geq 1$ and  $0\le r<d$,  {\em a locally finite chamber structure for the variation of semistability on $\Amp^d(X)\times\ldots\times\Amp^r(X)$} is a way to associate to each compact subset $K$ of $\Amp^d(X)\times\ldots\times\Amp^r(X)$  a finite set $W(\gamma,K)$ of closed algebraic hypersurfaces in $\Amp^d(X)\times\ldots\times\Amp^r(X)$ called {\em walls} (or {\em $K$-walls}) with the property that for any connected component $C$ of $$(\Amp^d(X)\times\ldots\times\Amp^r(X)^{ss}_{(\gamma,K)}:=\Amp^d(X)\times\ldots\times\Amp^r(X)\setminus\bigcup_{w\in W(\gamma,K)}w,$$
any classes $\alpha, \ \alpha'\in C\cap K$ and any coherent sheaf $E$ in $\gamma$, $E$ is $\alpha$-semistable if and only if it is $\alpha'$-semistable. Such connected components $C$ will be called {\em chambers} (or {\em $K$-chambers}). When $K$ is fixed we will also speak of {\em $K$-chamber structure} when refering to $W(\gamma,K)$ and $(\Amp^d(X)\times\ldots\times\Amp^r(X)^{ss}_{(\gamma,K)}$. We will say that the chamber structure is {\em rational} if its walls are given by algebraic equations with rational coefficients. 
\end{Def}

We start by simplifying a bit the parameter space where we let $\alpha$ vary by noticing that a re-scaling by a positive real of any of the components of $\alpha$ does not change the $\alpha$-semistability. We thus may consider without loss of generality  sections $\Sigma_j$ of $\Amp^j(X)$ by (rational) affine linear hyperplanes $V_j$ in $N^j(X)$ such that $\Amp^j(X)$
become cones over $\Sigma_j$ and use $\Sigma:=\Sigma_d\times \ldots\times\Sigma_r$ as a parameter space for $\alpha$. We will also fix $K$ a compact subset of $\Sigma$. To further simplify the set-up, we will assume that the condition $\BSS(K', \gamma)$ from  Section \ref{subsection:BSS} is satisfied, where  $K'$ denotes the projection of $K$ to $\Sigma_d\times\Sigma_{d-1}$. 
The desired chamber structure on $\Sigma$ will be given by a finite set $W(\gamma,K)$ of walls in $\Sigma$ depending on the chosen compact $K$. The finiteness of $W(\gamma,K)$ will be a consequence of the hypothesis $\BSS(K', \gamma)$ and of the following boundedness statement. 
\begin{Lem}\label{lem:walls}
    Let $\gamma$, $\Sigma$ and $K$ be  as above and let $\gE$ be a bounded set of isomorphism classes of 
    coherent sheaves on $X$ in $\gamma$. We suppose $\gE$ to be  non-empty. Let further 
    $\gW(\gE,K)$ denote the set of isomorphism classes of pure $d$-dimensional proper quotients $F$ of some $E$ in $\gE$ which are weakly slope-destabilizing for $E$ with respect to  some $ \alpha\in K$, in the sense that $p_{\alpha,d-1}(F)\le p_{\alpha,d-1}(E)$. Then the set $\gW(\gE,K)$ is bounded.
\end{Lem}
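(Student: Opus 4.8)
The plan is to read off from the destabilizing inequality a bound on slopes, and then feed fixed-polarization degree bounds into the Grothendieck-type criterion already established in Lemma \ref{lem:boundednessI}. First I would unwind the truncation. Every $F \in \gW(\gE,K)$ is pure of dimension $d$ and every $E \in \gE$ represents $\gamma$, hence has dimension $d$; therefore $p_{\alpha,d-1}(F)$ and $p_{\alpha,d-1}(E)$ have the same leading term $m^d/d!$, and the inequality $p_{\alpha,d-1}(F) \le p_{\alpha,d-1}(E)$ is equivalent to the slope inequality $\mu_\alpha(F) \le \mu_\alpha(E)$, where $\mu_\alpha := \deg_{\alpha_{d-1}}/\deg_{\alpha_d}$. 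Since $\deg_{\alpha_d}$ and $\deg_{\alpha_{d-1}}$ depend only on the numerical class, $\deg_{\alpha_d}(E)=\deg_{\alpha_d}(\gamma)$ and $\mu_\alpha(E)=\deg_{\alpha_{d-1}}(\gamma)/\deg_{\alpha_d}(\gamma)$ depend only on $\alpha$, and as $\alpha$ ranges over the compact set $K$ they stay within fixed bounds, the denominator being positive and bounded away from $0$.

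\textbf{Easy inputs.} Next I would record two observations. Since $\gE$ is bounded it is dominated, say by $\cO_X(-m)^{\oplus N}$; composing a surjection $\cO_X(-m)^{\oplus N}\twoheadrightarrow E$ with a quotient $E\twoheadrightarrow F$ shows $\gW(\gE,K)$ is dominated as well. Moreover, from a presentation $0\to G\to E\to F\to 0$ and additivity one gets, for every ample $d$-class $\alpha_d$,
\[
    \deg_{\alpha_d}(F)=\deg_{\alpha_d}(\gamma)-\deg_{\alpha_d}(G)\le \deg_{\alpha_d}(\gamma),
\]
because $G$ has dimension $\le d$ and $\alpha_d$ is ample, so $\deg_{\alpha_d}(G)\ge 0$. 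Combining $\mu_\alpha(F)\le\mu_\alpha(E)$ with $\deg_{\alpha_d}(F)>0$, for the particular $\alpha=\alpha(F)\in K$ witnessing the destabilization we obtain a bound uniform over $\gW(\gE,K)$:
\[
    \deg_{\alpha_{d-1}(F)}(F)=\mu_{\alpha(F)}(F)\,\deg_{\alpha_d(F)}(F)\le C\,\deg_{\alpha_d(F)}(F)\le C\,\deg_{\alpha_d(F)}(\gamma)\le C',
\]
with $C:=\max\bigl(0,\ \sup_{\alpha\in K}\deg_{\alpha_{d-1}}(\gamma)/\deg_{\alpha_d}(\gamma)\bigr)$ and $C'$ absorbing $\sup_K\deg_{\alpha_d}(\gamma)$.

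\textbf{Main obstacle: passing from a moving polarization to a fixed one.} The difficulty is that the last bound is taken against the \emph{moving} class $\alpha_{d-1}(F)$, whereas Lemma \ref{lem:boundednessI} needs an upper bound against a \emph{fixed} ample $(d-1)$-class. To bridge this I would proceed in two steps. First, the first part of Lemma \ref{lem:boundednessI} (valid since $\gW(\gE,K)$ is dominated of dimension $\le d$ with $\deg_{\alpha_d}$ upper bounded) gives, for every ample $(d-1)$-class $\beta$, a lower bound $\deg_\beta(F)\ge c(\beta)$ on $\gW(\gE,K)$. Second, since $\pi_{d-1}(K)\subset\Amp^{d-1}(X)$ is compact in an open cone, I would cover it by \emph{finitely many} simplicial subcones $\sigma_l=\operatorname{cone}(\beta_1^l,\ldots,\beta_\rho^l)$ whose generators are ample classes forming a basis of $N^{d-1}(X)_\R$, choosing compact pieces $\mathcal C_l\subseteq\operatorname{int}(\sigma_l)$ covering $\pi_{d-1}(K)$ on which the barycentric coordinates are bounded below by some $\delta_l>0$. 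This is always possible because each point of an open cone lies in the interior of such a subcone, and positive combinations of ample classes are ample; note that enclosing the whole compact set in a single simplicial cone may fail, which is why the finite cover is needed. For $F$ with $\alpha_{d-1}(F)\in\mathcal C_l$, writing $\alpha_{d-1}(F)=\sum_i\lambda_i\beta_i^l$ with $\lambda_i\ge\delta_l$ and combining $\sum_i\lambda_i\deg_{\beta_i^l}(F)\le C'$ with the lower bounds $\deg_{\beta_i^l}(F)\ge c(\beta_i^l)$ yields upper bounds $\deg_{\beta_j^l}(F)\le D_j^l$. Expressing a fixed ample class $\alpha_{d-1}$ in each basis $\{\beta_i^l\}$ then bounds $\deg_{\alpha_{d-1}}(F)$ by a constant $M_l$, and taking the maximum over the finitely many $l$ gives a uniform upper bound $\deg_{\alpha_{d-1}}(F)\le M$ on all of $\gW(\gE,K)$.

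\textbf{Conclusion.} Having shown that $\gW(\gE,K)$ is dominated with both $\deg_{\alpha_d}$ and $\deg_{\alpha_{d-1}}$ upper bounded for fixed ample classes, I would invoke the second part of Lemma \ref{lem:boundednessI} to conclude that $\gW(\gE,K)_{(d)}$ is bounded; since every member is pure of dimension $d$ this set equals $\gW(\gE,K)$ itself, which finishes the argument. I expect the only genuinely delicate point to be the finite-cover/barycentric step, that is, making the compactness of $K$ do the work of transferring the per-sheaf slope bound against the varying polarization into a single uniform bound against a fixed one.
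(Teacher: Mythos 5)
Your proof is correct, but the central reduction step takes a genuinely different route from the paper's. The paper first observes that $\gW(\gE,K)$ depends only on the projection $K'$ of $K$ to $\Sigma_d\times\Sigma_{d-1}$, replaces $K'$ by a product of polytopes $\Conv(S_d)\times\Conv(S_{d-1})$ with \emph{finite} vertex sets, and then exploits the algebraic shape of the slope: for fixed $F$ the function $(\alpha_d,\alpha_{d-1})\mapsto \mu^{\alpha_{d-1}}_{\alpha_d}(F)$ is a quotient $f(x)/g(y)$ of affine-linear functions of the two variables separately, so its minimum over the product of polytopes is attained at a vertex pair in $S_d\times S_{d-1}$. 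Hence every $[F]\in\gW(\gE,K)$ satisfies $\mu^{\alpha_{d-1}}_{\alpha_d}(F)\le M$ at one of \emph{finitely many fixed} pairs, and Lemma \ref{lem:boundednessI} is invoked once per pair. You instead keep the per-sheaf witness $\alpha(F)$, extract the uniform bound $\deg_{\alpha_{d-1}(F)}(F)\le C'$ against the \emph{moving} class, and transfer it to a fixed class by covering $\pi_{d-1}(K)$ with finitely many compact pieces of simplicial cones spanned by ample bases, combining barycentric estimates with the lower bounds $\deg_\beta(F)\ge c(\beta)$ that you first harvest from the first half of Lemma \ref{lem:boundednessI}; this is in effect a hand-made rerun of the compactness mechanism of Lemma \ref{lem: elementary} that the paper only deploys \emph{inside} the proof of Lemma \ref{lem:boundednessI}. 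The comparison: the paper's vertex-minimization trick is shorter, needs no a priori lower bounds, and partitions $\gW(\gE,K)$ by which vertex attains the minimum, whereas your argument partitions by which cone piece contains the witness, never uses the bilinear structure of the slope (only linearity of $\deg_{\alpha_{d-1}}$ in $\alpha_{d-1}$ and continuity in $\alpha_d$, via $\deg_{\alpha_d(F)}(F)\le\deg_{\alpha_d(F)}(\gamma)\le\sup_K\deg_{\alpha_d}(\gamma)$), so it is more robust but re-proves machinery the paper gets for free. Two small points you should make explicit: on each compact piece $\mathcal C_l$ the barycentric coordinates are also bounded \emph{above} (automatic by compactness), which you need to control the terms $\lambda_i c(\beta_i^l)$ when the lower bounds $c(\beta_i^l)$ are negative; and when re-expanding the fixed class $\alpha_{d-1}$ in the basis $\{\beta_i^l\}$ the coefficients may have either sign, so you genuinely need the two-sided bounds on each $\deg_{\beta_i^l}(F)$ that you have assembled --- both points are fine as your set-up stands, and your final appeal to purity to identify $\gW(\gE,K)_{(d)}$ with $\gW(\gE,K)$ matches the paper's implicit use of the same fact.
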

\begin{proof}
 Note that for two compact subsets $K$ and $L$ of $\Sigma$ we have $\gW(\gE,K\cup L)=\gW(\gE,K)\cup\gW(\gE,L)$. Thus if $r<d-1$, we may replace $K$ by the larger compactum $K'\times K''$, where $K'$ denotes the projection of $K$ to $\Sigma_d\times\Sigma_{d-1}$ and $K''$ is the projection of $K$ to $\Sigma_{d-2}\times\ldots\times \Sigma_s$. (If $r=d-1$, we just take $K'=K$.) Clearly the set $\gW(\gE,K)$ will only depend on $K'$ and on $\gE$. Since any compact subset of $\Sigma_d\times\Sigma_{d-1}$ is covered by products $K_d\times K_{d-1}$ of compact polyhedrons $K_d$, $K_{d-1}$ in $\Sigma_d$ and $\Sigma_{d-1}$ respectively, we may suppose that our compact set $K'$ is of the form $K'=K_d\times K_{d-1}$ and $K_d$, $K_{d-1}$ are convex hulls of finite sets of points $S_d\subset\Sigma_d$ and $S_{d-1}\subset\Sigma_{d-1}$ respectively, $K_d=\Conv(S_d)$, $K_{d-1}=\Conv(S_{d-1})$. 

Since $\gE$ is bounded, the set $\gW(\gE,K)$ is dominated. To show its boundedness we need to bound the degrees of its elements. 

 For any $[F]\in \gW(\gE,K)$ the function  $$K\to\R, \ (\alpha_d,\alpha_{d-1})\mapsto \mu^{\alpha_{d-1}}_{\alpha_{d}}(F) $$ is of the form $(x,y)\mapsto\frac{f(x)}{g(y)}$, where $f$ and $g$ are affine linear, hence it attains its minimum on $S_d\times S_{d-1}$. 
If $E$ is a  coherent sheaf on $X$ in $\gamma$ we write $\mu^{\alpha_{d-1}}_{\alpha_{d}}(\gamma):=\mu^{\alpha_{d-1}}_{\alpha_{d}}(E)$ and set $M:= \max\{\mu^{\alpha_{d-1}}_{\alpha_{d}}(\gamma) \ ; \ (\alpha_d,\alpha_{d-1})\in K\}  $. 
Thus for each $[F]\in \gW(\gE,K)$ there exists a point $(\alpha_d,\alpha_{d-1})\in S_d\times S_{d-1}$ such that $\mu^{\alpha_{d-1}}_{\alpha_{d}}(F)\le M$. An application of Lemma \ref{lem:boundednessI} shows now that the set $\gW(\gE,K)$ is bounded. 
\end{proof}

Note that the above statement also holds for any bounded set $\gE$ of $d$-dimensional coherent sheaves on $X$. In that case we would deal with finitely many numerical classes $\gamma$ instead of just with one.

We will need also to deal with a set of "strongly" destabilizing quotients for a given set $\gE$  and a compactum $K$ as above. 

\begin{NotRem}\label{NotRem}
For $d>s\ge r$ we will denote by $\gD_s(\gE,K)$ the set of isomorphism classes of pure $d$-dimensional quotients $F$ of some $E$ in $\gE$ which are $(\alpha,s)$-destabilizing for $E$ with respect to  some $ \alpha\in K$, in the sense that $p_{\alpha,s}(F)< p_{\alpha,s}(E)$. If the set $\gE$ is bounded, then so is $\gD_s(\gE,K)$ too, since it is contained in $\gW(\gE,K)$.
\end{NotRem}

Before describing the general case we will consider two  cases of particular interest for the chamber structure on $\Sigma=\Sigma_d\times \ldots\times\Sigma_r$, the case when $r=d-1$ and the case when the compact subset $K\subset\Sigma$  is of the form $\{\alpha_d\}\times L$ for some compact subset $L$ of $\Sigma_{d-1}\times \ldots\times\Sigma_r$. Note that the latter always occurs if $d=n$. 

\subsection{The case of slope-semistability} This is the case when $r=d-1$, so $\alpha$ varies in $\Amp^d(X)\times\Amp^{d-1}(X)$.

Let $\gE(\gamma,K)$ be the set of isomorphism classes of pure coherent sheaves on $X$ in $\gamma$ which are $\alpha$-semistable with respect to some $\alpha$ in $K$. (We will suppose that $\gE(\gamma,K)$ is non-empty.)
Under the assumption 
$\BSS(K, \gamma)$ the set $\gE(\gamma,K)$ is bounded so we can  apply Lemma \ref{lem:walls} to it and get a bounded set  $\gW(\gE(\gamma,K),K)$ of weakly destabilizing quotients for $\gE(\gamma,K)$. 
We associate to each $[F]\in \gD_{d-1}(\gE(\gamma,K),K)$ a {\em $K$-wall} 
\begin{equation}\label{eq:wall}
w_F:=\{(\alpha_d,\alpha_{d-1})\in \Sigma \ ; \ \mu^{\alpha_{d-1}}_{\alpha_{d}}(F)=\mu^{\alpha_{d-1}}_{\alpha_{d}}(\gamma)\}.
\end{equation}
Since $\gD_{d-1}(\gE(\gamma,K),K)$ is bounded, the set
\begin{equation}\label{eq:wallset}
W(\gamma,K):= \{w_F \ ; \ [F]\in\gD_{d-1}(\gE(\gamma,K),K)\}
\end{equation}
of $K$-walls is finite. Thus we get the following
\begin{prop}\label{prop:chamber-slope} 
For $r=d-1$ and $K$ a compact subset of  $\Sigma_d\times\Sigma_{d-1}$ satisfying the condition $\BSS(K, \gamma)$, the set $W(\gamma,K)$ defined in \refeq{eq:wall} and \refeq{eq:wallset} above  builds a rational wall system for a finite $K$-chamber structure for the variation of semistability on $\Sigma_{d}\times\Sigma_{d-1}$.
\end{prop}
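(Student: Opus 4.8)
The plan is to check the three requirements of Definition \ref{def:chambers} in turn: finiteness of $W(\gamma,K)$, that each $w_F$ is a closed rational algebraic hypersurface, and the invariance of $\alpha$-semistability of sheaves of class $\gamma$ along each chamber. Finiteness comes essentially for free from the boundedness already established: under $\BSS(K,\gamma)$ the set $\gE(\gamma,K)$ is bounded, so by Lemma \ref{lem:walls} the set $\gW(\gE(\gamma,K),K)$ is bounded, and hence so is $\gD_{d-1}(\gE(\gamma,K),K)\subset\gW(\gE(\gamma,K),K)$ (see \ref{NotRem}). A bounded family is parameterized by a scheme of finite type, which has finitely many connected components, on each of which the numerical class is constant; thus only finitely many classes $[F]\in K(X)_{\mathrm{num}}$ occur. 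As $w_F$ depends on $F$ only through $[F]$, the set $W(\gamma,K)$ is finite.

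To describe the walls I would clear denominators. Setting $B_F(\alpha):=\deg_{\alpha_{d-1}}(F)\deg_{\alpha_d}(\gamma)-\deg_{\alpha_{d-1}}(\gamma)\deg_{\alpha_d}(F)$, the equation $\mu^{\alpha_{d-1}}_{\alpha_d}(F)=\mu^{\alpha_{d-1}}_{\alpha_d}(\gamma)$ defining $w_F$ is equivalent to $B_F(\alpha)=0$, since $\deg_{\alpha_d}(F)$ and $\deg_{\alpha_d}(\gamma)$ are strictly positive on $\Sigma$ (they compute $\alpha_d$ against the $d$-dimensional support cycle, and $\alpha_d$ is ample). Each $\deg_{\alpha_j}(\cdot)=\int_X\ch(\cdot)\alpha_j\Todd_X$ is linear in $\alpha_j$ with rational intersection-number coefficients, so $B_F$ is bilinear in $(\alpha_d,\alpha_{d-1})$ and, in the rational affine sections $\Sigma_d,\Sigma_{d-1}$, is a polynomial with rational coefficients. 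Hence $w_F=\{B_F=0\}$ is a closed rational algebraic hypersurface; it is a proper hypersurface because every $[F]\in\gD_{d-1}(\gE(\gamma,K),K)$ strictly destabilizes some sheaf at some $\alpha\in K$, which forces $B_F<0$ there and so $B_F\not\equiv0$.

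The core of the argument is the chamber invariance, and the key idea is that the sign of each $B_F$ is locally constant off $w_F$. Fix a chamber $C$ and $\alpha,\alpha'\in C\cap K$, and let $E$ be any coherent sheaf in $\gamma$; by symmetry it suffices to prove that $\alpha$-semistability implies $\alpha'$-semistability. So assume $E$ is $\alpha$-semistable: then $E$ is pure of dimension $d$ (Definition \ref{def:alphaSs}), and since $\alpha\in K$ we have $E\in\gE(\gamma,K)$. If $E$ were not $\alpha'$-semistable, the seesaw property would produce, after saturating a destabilizing subsheaf, a pure $d$-dimensional proper quotient $E\twoheadrightarrow Q$ with $\mu^{\alpha'_{d-1}}_{\alpha'_d}(Q)<\mu^{\alpha'_{d-1}}_{\alpha'_d}(\gamma)$, i.e. $B_Q(\alpha')<0$. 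Then $[Q]\in\gD_{d-1}(\gE(\gamma,K),K)$, so $w_Q\in W(\gamma,K)$; as $C$ is a connected component of $\Sigma\setminus\bigcup_{w\in W(\gamma,K)}w$ it is disjoint from $w_Q=\{B_Q=0\}$, so the continuous function $B_Q$ has constant sign on $C$. From $B_Q(\alpha')<0$ we conclude $B_Q(\alpha)<0$, i.e. $Q$ destabilizes $E$ already at $\alpha$, contradicting $\alpha$-semistability. Hence $E$ is $\alpha'$-semistable, which is exactly the invariance required by Definition \ref{def:chambers}.

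The only step needing real care is the reduction, when $E$ is $\alpha'$-unstable, to a destabilizing quotient that is pure of dimension $d$ and thus lies in $\gD_{d-1}(\gE(\gamma,K),K)$: one replaces a destabilizing subsheaf by its saturation and passes to $(E/F)_{(d)}$, checking that this keeps the $\alpha_d$-degree strictly between $0$ and $\deg_{\alpha_d}(E)$ and does not raise the slope, using that $\deg_{\alpha_{d-1}}$ is non-negative on the lower-dimensional quotient part. Once this and Lemma \ref{lem:walls} are in place, the proposition follows formally.
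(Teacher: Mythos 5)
Your proof is correct and takes essentially the same route as the paper: finiteness of $W(\gamma,K)$ via $\BSS(K,\gamma)$, Lemma \ref{lem:walls} and the finiteness of numerical classes in a bounded family, and rationality plus the hypersurface property from the cleared-denominator bilinear equation $B_F=0$, exactly as in the paper's remarks surrounding the proposition. The chamber-invariance step via sign-constancy of $B_F$ on connected components, together with the saturation/purification of the destabilizing quotient, is left implicit in the paper, and your explicit treatment of it (including the observation that $\deg_{\alpha_{d-1}}\ge 0$ on the torsion part) is accurate.
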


It is easily seen that by the definition of $\gD_{d-1}(\gE(\gamma,K),K)$ walls of type $w_F$ for $[F]\in \gD_{d-1}(\gE(\gamma,K),K)$ are non-empty proper closed subsets of $\Sigma$.
If both $\Sigma_d$ and $\Sigma_{d-1}$ are $0$-dimensional, then $\Sigma$ is a point, and there is no place for a chamber structure; the set of walls is empty in this case. 
If only one of $\Sigma_d$, $\Sigma_{d-1}$ is $0$-dimensional, then walls may exist and in this case they are intersections of linear hyperplanes with $\Sigma$. 
Note that this situation always occurs if $d=n$ or $d=1$. 
Finally if both $\Sigma_d$ and $\Sigma_{d-1}$ are positive dimensional, walls are given by (affine rational) bilinear equations in the variables $(x,y)\in V_d\times V_{d-1}$. In particular, also in this case walls are algebraic hypersurfaces as required.

\subsection{The case of fixed top degree} The following Proposition says that in this case the corresponding  chamber structure has a particularly well organized prismatic aspect. 
\begin{prop}\label{prop:chamber-top}
Let $\alpha_d\in \Sigma_d$ be a fixed ample $d$-class in $\Sigma_d$ and let $K$ be a subset of 
$\Amp^{d}(X)\times\ldots\times\Amp^r(X)$ 
of the form $K=\{\alpha_d\}\times L$, where $L$ is some compact subset of $\Sigma_{d-1}\times\ldots\times\Sigma_r$ such that the condition $\BSS(K', \gamma)$ is satisfied. Then for each $s$ with $d>s\ge r$ there exists a finite set $\cW_s$ of rational affine hyperplanes in $\Sigma_s$ such that 
$$W(\gamma,K):=\bigcup_{s=r}^{d-1} W(\gamma,K)_s$$ builds the wall system of a finite $K$-chamber structure for the variation of semistability on $\Sigma_{d}\times\ldots\times\Sigma_r$, where
$$W(\gamma,K)_s:=\{\Sigma_d\times\ldots\times\Sigma_{s+1}\times \omega\times\Sigma_{s-1}\times \ldots\times\Sigma_{r} \ ; \ \omega\in\cW_s\}.$$
\end{prop}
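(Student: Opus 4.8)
The plan is to exploit that, with $\alpha_d$ fixed, the reduced-polynomial comparison underlying $\alpha$-semistability \emph{decouples} across the remaining variables $\alpha_{d-1},\ldots,\alpha_r$, so that each potential wall feels only one factor $\Sigma_s$. First I would linearize the comparison. For a $d$-dimensional sheaf $E$ in $\gamma$ and a pure $d$-dimensional proper quotient $F$, clearing denominators in $p_\alpha(F)\le p_\alpha(E)$ produces the polynomial $Q_F(m)=\deg_{\alpha_d}(E)P_\alpha(F,m)-\deg_{\alpha_d}(F)P_\alpha(E,m)$, which has the same sign as $p_\alpha(F)-p_\alpha(E)$ for $m\gg0$, whose leading ($m^d$) coefficient vanishes, and whose coefficient of $m^s/s!$ is $\lambda^F_s=\deg_{\alpha_d}(E)\deg_{\alpha_s}(F)-\deg_{\alpha_d}(F)\deg_{\alpha_s}(E)$. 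Because $\alpha_d$ is fixed and $\deg_{\alpha_d}(E),\deg_{\alpha_s}(E)$ depend only on $\gamma$ while $\deg_{\alpha_d}(F),\deg_{\alpha_s}(F)$ depend only on the numerical class of $F$, the quantity $\lambda^F_s$ is an affine-linear function of $\alpha_s$ alone. Thus $F$ destabilizes $E$ (i.e. $(\alpha,r)$-destabilizes it) if and only if the sign sequence $(\operatorname{sign}\lambda^F_{d-1}(\alpha_{d-1}),\ldots,\operatorname{sign}\lambda^F_r(\alpha_r))$ is lexicographically negative, and the only loci where an individual sign flips are the hyperplanes $\omega^{(s)}_F=\{\alpha_s\in\Sigma_s\mid\lambda^F_s(\alpha_s)=0\}$, each contained in a single factor.

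Next I would produce the walls by boundedness. Let $\gE=\gE(\gamma,K)$ be the set of sheaves in $\gamma$ that are $\alpha$-semistable for some $\alpha\in K$; it is bounded by $\BSS(K',\gamma)$. Put $\gD:=\gD_r(\gE,K)$; by Notation and Remark \ref{NotRem} together with Lemma \ref{lem:walls} the set $\gD$ is bounded, hence realizes only finitely many numerical classes, and by definition every quotient destabilizing the $\alpha'$-semistability of some $E\in\gE$ (for $\alpha'\in K$) occurs in $\gD$. For each such class with $\lambda^F_s\not\equiv0$ the locus $\omega^{(s)}_F$ is an affine hyperplane in $\Sigma_s$; I collect these into the finite set $\cW_s$ and define $W(\gamma,K)_s$ as the prisms $\Sigma_d\times\cdots\times\omega\times\cdots\times\Sigma_r$ over $\omega\in\cW_s$, the prismatic shape being forced by the decoupling. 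Rationality of $\cW_s$ reduces to checking that $\lambda^F_s$ has rational coefficients: the classes paired against $\alpha_s$ are the rational numerical components of $\ch(F)\Todd_X$ and $\ch(E)\Todd_X$, so the only issue is the scalar ratio $\deg_{\alpha_d}(F)/\deg_{\alpha_d}(E)$, which in the principal instance $d=n$ equals $\rk F/\rk E$.

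Finally I would verify that these walls cut out a correct chamber structure in the sense of Definition \ref{def:chambers}. Fix a chamber $C$ and $\alpha,\alpha'\in C\cap K$, and suppose $E$ in $\gamma$ is $\alpha$-semistable but not $\alpha'$-semistable, witnessed by a pure $d$-dimensional quotient $F$; let $s^{*}$ be the deciding level at $\alpha'$, i.e. the largest $s$ with $\lambda^F_s(\alpha'_s)\neq0$, so that $\lambda^F_{s^{*}}(\alpha'_{s^{*}})<0$. Since $E\in\gE$ we have $[F]\in\gD$, hence every $\omega^{(s)}_F$ with $\lambda^F_s\not\equiv0$ is among our walls, and on the connected set $C$ each $\lambda^F_s$ has constant sign (nonzero of fixed sign when $\omega^{(s)}_F$ is a genuine wall avoided by $C$, identically $0$ otherwise). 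The delicate point, which I expect to be the crux, is the handling of the lexicographic ties above $s^{*}$: for $s'>s^{*}$ we have $\lambda^F_{s'}(\alpha'_{s'})=0$, yet a genuine hyperplane wall cannot pass through $\alpha'\in C$, so necessarily $\lambda^F_{s'}\equiv0$ and the tie persists at $\alpha$; meanwhile $\lambda^F_{s^{*}}$ is a genuine wall, negative at $\alpha'$ and therefore negative throughout $C$, in particular at $\alpha$. Hence $F$ also destabilizes $E$ at $\alpha$, contradicting $\alpha$-semistability. By symmetry the two semistability notions agree on $C\cap K$ for every $E$ in $\gamma$, which is exactly the required chamber property.
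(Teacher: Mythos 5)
Your proposal is correct and follows the same overall strategy as the paper: bound $\gE(\gamma,K)$ via $\BSS(K',\gamma)$, bound the destabilizing quotients via Lemma \ref{lem:walls} and Notation and Remark \ref{NotRem}, observe that with $\alpha_d$ fixed each level-$s$ comparison coefficient $\lambda^F_s$ is an affine function of $\alpha_s$ alone (whence the prismatic walls), and conclude by constancy of signs on connected components. The one genuine difference is the wall bookkeeping, and it works in your favour. The paper stratifies the destabilizers as $\gD_{d-1}\subset\cdots\subset\gD_r$ and attaches to each $F$ in the stratum $\gD_s\setminus\gD_{s+1}$ \emph{only} its level-$s$ hyperplane $\omega_{s,F}$, leaving the final verification as ``one verifies now easily''; you instead attach to every $[F]\in\gD_r(\gE,K)$ \emph{all} of its nontrivial hyperplanes $\omega^{(s)}_F$, $r\le s\le d-1$. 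Your larger (still finite, by finiteness of numerical classes in a bounded family) wall set is exactly what makes the lexicographic-tie step airtight: if the deciding level at $\alpha'$ is $s^*$, then for $s'>s^*$ the vanishing $\lambda^F_{s'}(\alpha'_{s'})=0$ forces $\lambda^F_{s'}\equiv0$, since otherwise a wall of your system would pass through a chamber point — an argument that is less immediate with the paper's leaner stratified system, where the hyperplanes $\{\lambda^F_{s'}=0\}$ at levels other than the stratum level of $F$ are not included and one must argue separately that ties cannot break inside a chamber. The cost is at most finitely many superfluous walls, which is harmless for Definition \ref{def:chambers}. Finally, you are right to flag rationality as the delicate point that neither your argument nor the paper's sketch fully settles: the normal direction of $\omega^{(s)}_F$ is $\deg_{\alpha_d}(\gamma)\,c_s(F)-\deg_{\alpha_d}(F)\,c_s(\gamma)$ with $c_s$ the (rational) numerical components of $\ch(\cdot)\Todd_X$, so the hyperplane is rational precisely when the ratio $\deg_{\alpha_d}(F)/\deg_{\alpha_d}(\gamma)$ is rational — automatic for $d=n$ (where it is $\rk F/\rk\gamma$) or for rational $\alpha_d$, but not for an arbitrary fixed real $\alpha_d$ when $d<n$; your honest restriction to the principal instance is the appropriate caveat.
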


\begin{proof}
    For $K=\{\alpha_d\}\times L$ compact subset of $\Sigma_{d}\times\ldots\times\Sigma_r$,  we consider as before the set  $\gE(\gamma,K)$ of isomorphism classes of pure coherent sheaves on $X$ in $\gamma$ which are $\alpha$-semistable with respect to some $\alpha$ in $K$ and the set $\gW(\gE(\gamma,K),K)$ of weakly destabilizing quotients for $\gE(\gamma,K)$. Under the assumption $\BSS(K', \gamma)$ this set  is bounded by Lemma \ref{lem:walls}. Taking up the Notation and Remark \ref{NotRem} we see that we have a sequence of inclusions of sets of strongly destabilizing quotients
    $$\gD_{d-1}(\gE(\gamma,K),K)\subset\ldots
\subset \gD_r(\gE(\gamma,K),K).$$ We define successively 
$$ \cW_{d-1}:=\{ \omega_{d-1,F} \ ; \ [F]\in \gD_{d-1}(\gE(\gamma,K),K)\},$$
$$ \cW_{d-2}:=\{ \omega_{d-2,F} \ ; \ [F]\in \gD_{d-2}(\gE(\gamma,K),K)\setminus \gD_{d-1}(\gE(\gamma,K),K)\},$$
$$\ldots,$$
$$ \cW_{r}:=\{ \omega_{r,F} \ ; \ [F]\in \gD_{r}(\gE(\gamma,K),K)\setminus \gD_{r+1}(\gE(\gamma,K),K)\},$$
where 
$$\omega_{s,F}:=\{\alpha_s\in\Sigma_s \ ; \ 
\frac{\deg_{\alpha_s}(F)}{\deg_{\alpha_d}(F)}= \frac{\deg_{\alpha_s}(\gamma)}{\deg_{\alpha_d}(\gamma)}\}
$$
for each $s$ with $d>s\ge r$. One verifies now easily that the sets $\cW_s$, $r\le s\le d-1$ can be used to build a "prismatic" $K$-chamber structure as in the statement of the Proposition.
\end{proof}

\subsection{The general case} A combination of the discussed special cases gives us an existence statement for a chamber structure on $\Amp^{d}(X)\times\ldots\times\Amp^r(X)$  in the general case. 
\begin{prop}\label{prop:chamber-general}
Let $K$ be a compact subset of 
$\Sigma:=\Sigma_{d}\times\ldots\times\Sigma_r$ 
 such that the condition $\BSS(K', \gamma)$ is satisfied. Then for each $s$ with $d>s\ge r$ there exists a finite set $W(\gamma,K)_s$ of rational algebraic hypersurfaces in $\Sigma$ such that 
$$W(\gamma,K):=\bigcup_{s=r}^{d-1} W(\gamma,K)_s$$ builds the wall system of a finite $K$-chamber structure for the variation of semistability on $\Sigma_{d}\times\ldots\times\Sigma_r$.
\end{prop}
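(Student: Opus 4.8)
The plan is to realise the general chamber structure as the common refinement of the two special cases already treated, built from a single bounded family of destabilising quotients. First I would reduce to boundedness exactly as in Propositions \ref{prop:chamber-slope} and \ref{prop:chamber-top}: since $\alpha$-semistability forces slope-semistability with respect to $(\alpha_d,\alpha_{d-1})\in K'$ (Definition \ref{def:alphaSs}), the hypothesis $\BSS(K',\gamma)$ makes the set $\gE(\gamma,K)$ of sheaves in $\gamma$ that are $\alpha$-semistable for some $\alpha\in K$ bounded, and Lemma \ref{lem:walls} then bounds the set $\gD:=\gW(\gE(\gamma,K),K)$ of weakly destabilising quotients. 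Boundedness of $\gD$ is what I need: it guarantees that $\gD$ comprises only finitely many numerical classes, which is the source of the finiteness of the wall system.

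Next I would attach to each destabilising quotient a family of walls, one per level. For $[F]\in\gD$ and $r\le s\le d-1$, consider the function
\[
    \delta_{s,F}(\alpha)=\deg_{\alpha_s}(F)\,\deg_{\alpha_d}(\gamma)-\deg_{\alpha_s}(\gamma)\,\deg_{\alpha_d}(F),
\]
which is bilinear in $(\alpha_s,\alpha_d)$ with rational coefficients, because each $\deg_{\alpha_j}$ is linear in its argument with rational (intersection-number) coefficients and each $\Sigma_j$ is cut out by a rational affine hyperplane. When $\delta_{s,F}$ is not identically zero I set $w_{F,s}:=\{\alpha\in\Sigma\mid\delta_{s,F}(\alpha)=0\}$, a rational algebraic hypersurface, and put $W(\gamma,K)_s:=\{w_{F,s}\mid[F]\in\gD\}$. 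Since $\delta_{s,F}$ depends only on the numerical class of $F$, and $\gD$ meets finitely many such classes, each $W(\gamma,K)_s$ is finite; this collection specialises to the bilinear walls of Proposition \ref{prop:chamber-slope} when $s=d-1$ and to the prismatic hyperplane walls of Proposition \ref{prop:chamber-top} once $\alpha_d$ is frozen.

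The core step is to verify that $\alpha$-semistability is constant on each connected component $C$ of the complement, intersected with $K$. The key observation is that the coefficient of $m^s$ in $p_\alpha(F)-p_\alpha(\gamma)$ is a positive multiple of $\delta_{s,F}(\alpha)$, so the lexicographic comparison of the reduced $\alpha$-Hilbert polynomials is read off from the top nonzero entry of the sign vector $(\operatorname{sign}\delta_{d-1,F},\dots,\operatorname{sign}\delta_{r,F})$. On $C$ every non-identically-zero $\delta_{s,F}$ is nowhere zero, hence of constant sign by connectedness, while the identically-zero ones (the $\alpha$-independent trivial levels) vanish everywhere; thus this whole vector is constant on $C$ for each $[F]\in\gD$, and whether $F$ destabilises is the same for all $\alpha\in C$. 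To conclude I would argue by contraposition: if $E\in\gamma$ is $\alpha$-semistable but not $\alpha'$-semistable for $\alpha,\alpha'\in C\cap K$, then saturating a destabilising subsheaf at $\alpha'$ produces a pure $d$-dimensional proper quotient $E\twoheadrightarrow F$ that is weakly destabilising, so $[F]\in\gD$; the constancy just established then forces $F$ to destabilise $E$ already at $\alpha$, a contradiction. Sheaves $E$ outside $\gE(\gamma,K)$ are semistable for no member of $K$, so the equivalence is vacuous for them.

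I expect the main obstacle to be this last verification, specifically the bookkeeping that ties a change of $\alpha$-semistability of a fixed $E$ to the sign change of a single $\delta_{s,F}$ indexed by $\gD$. The delicate points are (i) replacing an arbitrary destabiliser by a pure $d$-dimensional quotient lying in the bounded set $\gD$, so that it is captured by one of our finitely many walls, and (ii) confirming that the lexicographic order on reduced $\alpha$-Hilbert polynomials is faithfully recorded by the sign vector of the $\delta_{s,F}$ across all levels simultaneously, including the trivial levels where proportionality of numerical classes makes $\delta_{s,F}\equiv0$. Everything else is a direct combination of Lemma \ref{lem:walls} and Notation and Remark \ref{NotRem} with the two special cases.
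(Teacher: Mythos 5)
Your argument is correct, and its skeleton is the paper's: boundedness of $\gE(\gamma,K)$ from $\BSS(K',\gamma)$ (since $\alpha$-semistability truncates to slope-semistability for the pair $(\alpha_d,\alpha_{d-1})\in K'$), boundedness of the destabilizing quotients via Lemma \ref{lem:walls}, walls cut out by the rational bilinear equations $\deg_{\alpha_s}(F)\deg_{\alpha_d}(\gamma)=\deg_{\alpha_s}(\gamma)\deg_{\alpha_d}(F)$, and finiteness from the finitely many numerical classes occurring in a bounded family. Where you genuinely diverge is in the selection of walls. The paper stratifies the destabilizing quotients by the nested sets $\gD_{d-1}(\gE(\gamma,K),K)\subset\cdots\subset\gD_r(\gE(\gamma,K),K)$ of Notation and Remark \ref{NotRem} and attaches to each quotient $F$ a \emph{single} wall $\omega_{s,F}$, at the top level $s$ at which $F$ strictly destabilizes somewhere on $K$; you instead take all of $\gW(\gE(\gamma,K),K)$ and attach to each $F$ a wall at \emph{every} level $s$ with $\delta_{s,F}\not\equiv 0$. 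Your system is a finite refinement of the paper's, and the redundancy buys a cleaner verification: once every locus $\{\delta_{s,F}=0\}$ is excised, the full sign vector $(\operatorname{sign}\delta_{d-1,F},\ldots,\operatorname{sign}\delta_{r,F})$ is constant on each chamber, so the lexicographic comparison of $p_\alpha(F)$ with $p_\alpha(\gamma)$ is manifestly chamber-constant, and your saturation/contraposition step (which correctly produces a pure $d$-dimensional proper quotient lying in $\gW(\gE(\gamma,K),K)$, matching the subsheaf condition of Definition \ref{def:alphaSs} via the seesaw identity) closes the argument with no case analysis. By contrast, in the paper's leaner system the behaviour of $\delta_{t,F}$ at levels $t$ above $F$'s assigned wall level is controlled only indirectly, through $F\notin\gD_{t}(\gE(\gamma,K),K)$: this rules out sign changes of $\delta_{t,F}$ on $K$ but not tangential vanishing ($\delta_{t,F}\ge 0$ on a chamber with zeros inside $C\cap K$), a degenerate configuration that your extra walls remove outright, so your version actually makes rigorous the step the paper leaves as ``one verifies now easily.'' Two cosmetic points: discard the (harmless) indices for which $\{\delta_{s,F}=0\}$ misses $\Sigma$, so that every listed wall is literally a hypersurface as in Definition \ref{def:chambers}; and note that your construction recovers the walls of Propositions \ref{prop:chamber-slope} and \ref{prop:chamber-top} only up to adding finitely many superfluous ones, which is immaterial for the statement.
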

\begin{proof}
 As in the case of the fixed top degree, for a compact subset $K$ of $\Sigma$ satisfying $\BSS(K', \gamma)$ we get  a sequence of inclusions of bounded sets of strongly destabilizing quotients
    $$\gD_{d-1}(\gE(\gamma,K),K)\subset\ldots
\subset \gD_r(\gE(\gamma,K),K).$$ We define successively 
$$ W(\gamma,K)_{d-1}:=\{ \omega_{d-1,F} \ ; \ [F]\in \gD_{d-1}(\gE(\gamma,K),K)\},$$
$$ W(\gamma,K)_{d-2}:=\{ \omega_{d-2,F} \ ; \ [F]\in \gD_{d-2}(\gE(\gamma,K),K)\setminus \gD_{d-1}(\gE(\gamma,K),K)\},$$
$$\ldots,$$
$$ W(\gamma,K)_{r}:=\{ \omega_{r,F} \ ; \ [F]\in \gD_{r}(\gE(\gamma,K),K)\setminus \gD_{r+1}(\gE(\gamma,K),K)\},$$
where 
$$\omega_{s,F}:=\{\alpha=(\alpha_d,\ldots,\alpha_r)\in\Sigma \ ; \ 
\frac{\deg_{\alpha_s}(F)}{\deg_{\alpha_d}(F)}= \frac{\deg_{\alpha_s}(\gamma)}{\deg_{\alpha_d}(\gamma)}\}
$$
for each $s$ with $d>s\ge r$. One verifies now easily that the set $$W(\gamma,K):=\bigcup_{s=r}^{d-1} W(\gamma,K)_s$$ builds the wall system of a finite $K$-chamber structure for the variation of semistability on $\Sigma$ as stated.
\end{proof}

%%%%%%%%%%%%%%%%%%%%%%%%%%%%%%%%%%%%%%%%%%%%%%%%%%%%%%%%%%%%%%%%%%%%%%
\bibliography{bib2021}
\bibliographystyle{amsalpha}

%\end{thebibliography}

\medskip
\medskip
\begin{center}
\rule{0.4\textwidth}{0.4pt}
\end{center}
\medskip
\medskip
\end{document}